\documentclass[reqno,11pt]{amsart}
%%%%%%%%%%%%%%%%%%%%%%%%%%%%%%%%%%%%%%%%%%%%%%%%%%%%%%
\setlength{\oddsidemargin}{0.0in}
\setlength{\evensidemargin}{0.0in}
\setlength{\textwidth}{6.5in}
\setlength{\topmargin}{0.0in}
\setlength{\textheight}{8.5in}

%%%%%%%%%%%%%%%%%%%%%%%%%%%%%%%%%%%%%%%%%%%%%%%%%%%%%%%%
\usepackage{times}
\usepackage{amsfonts,amsmath,amssymb}
\usepackage{mathrsfs,enumerate}
\usepackage[latin1]{inputenc}
\usepackage[T1]{fontenc}
%%%%%%%%%%%%%%%%%%%%%%%%%%%%%%%%%%%%%%%%%%%%%%%%%%%%%%%%%%%%%%
\numberwithin{equation}{section}
\newtheorem{theorem}{Theorem}[section]
\newtheorem{lemma}[theorem]{Lemma}

\allowdisplaybreaks
%%%%%%%%%%%%%%%%%%%%%%%%%%%%%%%%%%%%%%%%%%%%%%%%%%%%%%%%%%%%%%%%%%%%%%
\newcommand{\R}{\mathbb{R}}

%%%%%%%%%%%%%%%%%%%%%%%%%%%%%%%%%%%%%%%%%%%%%%%%%%%%%%%%%%%%%%%%%%%
\newcommand{\norm}[1]{\left\Vert#1\right\Vert}
\newcommand{\abs}[1]{\left\vert#1\right\vert}
\newcommand{\set}[1]{\left\{#1\right\}}
\newcommand{\para}[1]{\left(#1\right)}

\newcommand{\p}{\partial}
%%%%%%%%%%%%%%%%%%%%%%%%%%%%%%%%%%%%%%%%%%%%%%%%%%%%%%%%%%%%%%%%%%%%

\begin{document}
\title[Determining the implied volatility in the Dupire equation for vanilla European call options]{Determining the implied volatility in the Dupire equation for vanilla European call options}
\author{M. Bellassoued, R. Brummelhuis, M. Cristofol, E. Soccorsi}
\address{Mourad Bellassoued, University of  Carthage, Faculty of Sciences of Bizerte, Department of Mathematics, 7021 Jarzouna Bizerte, Tunisia}
\email{mourad.bellassoued@fsb.rnu.tn}
\address{Raymond Brummelhuis, LMR, EA 4535, Universit\'e de Reims, France}
\email{raymondus.brummelhuis@univ-reims.fr}
\address{Michel Cristofol, LATP-CNRS, UMR 7353, Aix-Marseille Universit\'e, France}
\email{cristo@cmi.univ-mrs.fr}
\address{Eric Soccorsi, CPT-CNRS, UMR 7332, Aix-Marseille Universit\'e, France}
\email{eric.soccorsi@univ-amu.fr}
\date{}
%\maketitle
\begin{abstract}
The Black-Scholes model gives vanilla Europen call option prices as a function of the volatility. 
We prove Lipschitz stability in the inverse problem of determining the implied volatility, which is a function of the underlying asset, from a collection of quoted option prices with different strikes.
\end{abstract}
\maketitle

\section{Introduction and main result}
\label{sec-intro}

An option is a contract to buy or sell a specific financial product known as the option's underlying asset. Its primary uses are speculation and hedging in the sense that this is a cheap way of either making important returns by exposing a porfolio to a large amount of risk, or reducing the risk arising from unanticipated changes in the underlying price. The commonest model of the asset price $S$ at time $t$ is given by the stochastic differential equation
\begin{equation}
\label{rw}
dS = \mu S dt + \sigma S dW,
\end{equation}
where the randomness, which is a feature of asset prices, is contained in the Wiener process $W$.
Here $\mu$ is a measure of the average rate of growth of the asset price and the coefficient $\sigma$, called {\it volatility}, measures the standard deviation of the returns.

The holder of a vanilla European call option, which is the simplest financial option, has the right (but not the obligation) to buy the underlying asset $S$ at a prescribed time $T$ in the future, known as the expiry date, for a prescribed amount $K$, called the exercise or strike price. In an ideal financial market, the price at time $t$ of this option, $v (S, t)$, is the solution to the celebrated Black-Scholes equation (see  \cite{BS})
\begin{equation}\label{1.1}
 \frac{\p v}{\p t}+\frac{1}{2}S^2\sigma^2\frac{\p^2v}{\p S^2}+(r-q)S\frac{\p v}{\p S}-rv=0.
\end{equation}
where $r$ is the risk-free interest rate and $q$ is the continuous dividend yield. We notice that (\ref{1.1}) does not contain the growth parameter $\mu$, showing that the value of an option is independent on how rapidly an asset grows. The only coefficient in the random walk (\ref{rw}) governing the asset price that affects the option price is the volatility $\sigma$.

In practice, both $r$ and $q$ are known and measurable quantities. We assume in this text that they are constant, with $r>0$. In contrast to $r$ and $q$ the volatility $\sigma$ is an unknown and unobservable coefficient of the above described system. In the general case $\sigma$ depends on $S$ and $t$ but we suppose in the sequel that $\sigma$ is a function of $S$ only, fulfilling
\begin{equation}
\label{1.2}
0 < \sigma_{\min} \leq \sigma(S) \leq \sigma_{\max} < \infty,\ S \geq 0.
\end{equation}

Since (\ref{1.1}) is a backward parabolic equation and the value $v(S,t)$ should be unique, suitable final and boundary conditions have to be prescribed in order to make the solution 
to the Black-Scholes equation unique. 
Namely, the final condition of a vanilla European call option is just its payoff at time $T$:
\begin{equation}\label{1.3}
 v(S,T)=\max(S-K,0),\ S\geq 0.
\end{equation}
Further, as $S$ belongs to $(0,+\infty)$, we need to impose a boundary condition at $S=0$. 
This can be done by plugging $S = 0$ in (\ref{1.1}), getting
$$
\frac{\p v}{\p t}(0,t)=rv(0,t),\ t \in (0,T),
$$
and then combining the above equation with the identity $v(0,T)=0$, arising from (\ref{1.3}). This entails
\begin{equation}\label{1.5}
v(0,t)=0,\ t \in (0,T).
\end{equation}
It is well known that (\ref{1.1}) and (\ref{1.3})-(\ref{1.5}) admits a unique solution
$$
v \in \mathcal{C}^0((0,\infty)\times[0, T])\cap \mathcal{C}^1((0,\infty)\times (0, T]).
$$
If $S$ becomes arbitrarily large, then it becomes ever more likely that the option will be exercised, whatever the magnitude of the strike price is. Thus, as
$S \to\infty$, the value of the option is determined by the price $S$ of the asset minus the amount of money $M(t)$ paid at time $t \in (0,T)$ to exercise the option at future time $T$. Since money $M(t)$ in a bank with constant interest rate $r$ grows exponentially according to $M'(t)=r M(t)$, the value at time $t$ of a payoff $K$ at $t=T$ is $M(t)=K e^{-r(T-t)}$. Therefore, we have
$$
\lim_{S\to\infty}\para{v(S,t)-Ke^{-r(T-t)}}=0,\ t \in (0,T).
$$
%The last condition guarantees that the price paid for the right to buy a stock cannot
%exceed the stock price itself.
%\medskip

The Black-Scholes model defined by (\ref{1.1}) and (\ref{1.3})-(\ref{1.5}) gives option prices as a function of $\sigma$. The volatility can be estimated from historical data, but this does not accurately predicts the future volatility required by the model in practice. 
However, since option prices are quoted in the market, it is commonly admitted that the volatility is fully determined from the market. In mathematical language this can be translated by saying that the volatility can be identified from 
option prices, or that there is a one-to-one correspondence between the volatility and the option quotes. This volatility coefficient, derived from the quoted price for a single option, is called the {\it implied volatility}.

Having said that, we fix $S^*>0$, $t^* \in (0,T)$, and pick a subinterval $I^*$ of $(0,+\infty)$. Then the inverse problem of option pricing is to determine the pair of functions
$v(S,t)$ and $\sigma(S)$ fulfilling (\ref{1.1})--(\ref{1.5}) from the data
\begin{equation}\label{1.6}
v(S^*,t^*;K,T,\sigma)\equiv \phi_\sigma^*(K),\ K \in I^*.
\end{equation}
The main result of this article, stated below, claims that the implied volatility $K \mapsto \sigma(K)$ can be actually restored stably on any interval $I_1 \Supset I^*$ from the collection of simultaneous option quotes $\phi_\sigma^*(K)$ with different strikes $K \in I_1^*$, provided $\sigma$ is known in $(0,+\infty) \setminus I^*$. 

%The inverse problem of option prising was considered by Dupire in \cite{Dup}. He
%has shown that option prices given for all possible strikes and maturities
%completely determine the local volatility function. Therefore, if the continuum
%of these options were available the market would have been complete and all
%financial products would have had a uniquely determined price.
%%%%%%%%%%%%%%%%%%%%%%%%%%%%%%%%%%%%%%%%%%
\begin{theorem}\label{T1.1}
Let $\sigma_j \in L^{\infty}(0,+\infty)$, $j=1,2$, fulfill (\ref{1.2}), let 
$v_j$ denote the $\mathcal{C}((0,\infty)\times[0, T])\cap \mathcal{C}^1((0,\infty)\times (0, T])$-solution to (\ref{1.1}) and (\ref{1.3})-(\ref{1.5}), 
where $\sigma_j$ is substituted for $\sigma$ in (\ref{1.1}), and let $\phi_{\sigma_j}$ be the final data (\ref{1.6}) associated to $v_j$. Assume that $\sigma_1$ and $\sigma_2$ coincide outside some bounded subinterval $I^* \subset (0,\infty)$. Then for any interval $I^*_1\Supset I^*$ there is a constant $C>0$, depending only on $\sigma_{max}$, $I^*$ and $I_1^*$, such that we have
$$
\norm{\sigma_1-\sigma_2}_{L^2(I^*_1)}\leq C\norm{\phi_{\sigma_1}-\phi_{\sigma_2}}_{H^2(I^*_1)}.
$$
\end{theorem}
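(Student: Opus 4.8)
\emph{Strategy.} The plan is to dualise the problem through Dupire's equation: for fixed spot data $(S^{*},t^{*})$, the call price regarded as a function of the strike $K$ and of the time to maturity solves a forward parabolic equation whose coefficient is precisely $\sigma$. The difference of two such solutions then solves a parabolic equation with a \emph{separated} source $f(K)\rho(K,s)$ in which the factor $f$ carries the unknown $\sigma_{1}^{2}-\sigma_{2}^{2}$ and, by hypothesis, is supported in $I^{*}$; the desired inequality becomes a Lipschitz stability estimate for this inverse source problem, which we obtain by a Carleman estimate in the spirit of Bukhgeim and Klibanov.

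\emph{Step 1: Dupire reduction.} Keep $S^{*},t^{*}$ fixed, put $\tau:=T-t^{*}>0$, and set $U_{j}(K,s):=v_{j}(S^{*},t^{*};K,t^{*}+s)$ for $K\in(0,\infty)$, $s\in[0,\tau]$. Expressing $v_{j}$ through the transition density of the diffusion \eqref{rw} with volatility $\sigma_{j}$ — equivalently, differentiating \eqref{1.1}--\eqref{1.5} twice in $K$ and integrating back — one finds that $U_{j}$ solves the Dupire equation
\[
\p_{s}U_{j}=\tfrac12\sigma_{j}(K)^{2}K^{2}\p_{K}^{2}U_{j}-(r-q)K\p_{K}U_{j}-qU_{j},\qquad (K,s)\in(0,\infty)\times(0,\tau],
\]
with $U_{j}(K,0)=\max(S^{*}-K,0)$, the natural behaviour at $K=0$ and as $K\to\infty$, and $\phi_{\sigma_{j}}(K)=U_{j}(K,\tau)$. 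Moreover $\rho_{j}:=\p_{K}^{2}U_{j}$ is $e^{-rs}$ times the transition density started at $S^{*}$, hence strictly positive and smooth in $K$ for $s>0$ (but degenerating as $s\to0^{+}$, concentrating at $K=S^{*}$). Justifying this in the regularity class of Theorem~\ref{T1.1}, with $\sigma_{j}$ only bounded, takes a little care, most easily via the probabilistic representation.

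\emph{Step 2: The difference equation.} Put $w:=U_{1}-U_{2}$ and $f(K):=\tfrac12\big(\sigma_{1}(K)^{2}-\sigma_{2}(K)^{2}\big)K^{2}$. Subtracting the two Dupire equations, $w$ solves the one with coefficient $\sigma_{1}$, with $w(\cdot,0)=0$, homogeneous behaviour at the ends, and source $f(K)\,\rho_{2}(K,s)$. Since $\sigma_{1}=\sigma_{2}$ off $I^{*}$ we have $\operatorname{supp}f\subset\overline{I^{*}}\Subset I_{1}^{*}$, and since $\phi_{\sigma_{1}}-\phi_{\sigma_{2}}=w(\cdot,\tau)$ the data norm $\norm{\phi_{\sigma_{1}}-\phi_{\sigma_{2}}}_{H^{2}(I_{1}^{*})}$ controls $w(\cdot,\tau)$, $\p_{K}w(\cdot,\tau)$ and $\p_{K}^{2}w(\cdot,\tau)$ on $I_{1}^{*}$. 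Because \eqref{1.2} gives $2\sigma_{\min}\le\sigma_{1}+\sigma_{2}\le2\sigma_{\max}$ and $K$ is bounded and bounded away from $0$ on $I^{*}$, the conclusion of the theorem is equivalent to
\[
\norm{f}_{L^{2}(I^{*})}\le C\,\norm{w(\cdot,\tau)}_{H^{2}(I_{1}^{*})}.
\]

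\emph{Step 3: Stability of the source, and the main obstacle.} It remains to recover the $I^{*}$-supported factor $f$ from the \emph{single} trace $w(\cdot,\tau)$ on $I_{1}^{*}\Supset\operatorname{supp}f$. Following Bukhgeim--Klibanov, differentiate the equation in $s$: $y:=\p_{s}w$ solves the same parabolic equation with source $f\,\p_{s}\rho_{2}$ and, as $w(\cdot,0)=0$, has initial trace $y(\cdot,0)=f\,\rho_{2}(\cdot,0)$. One then applies a global parabolic Carleman estimate, on $I_{1}^{*}\times(0,\tau)$, for the operator $\p_{s}-\tfrac12\sigma_{1}^{2}K^{2}\p_{K}^{2}+(r-q)K\p_{K}+q$, with a weight $e^{2\lambda\varphi}$ whose spatial profile peaks on $I^{*}$ and is supported away from $\p I_{1}^{*}$ (this is where the slack $I_{1}^{*}\setminus I^{*}$ is used), singular as $s\to0$ and monotone in $s$; combined with a weighted energy identity, this produces $\int_{I^{*}}e^{2\lambda\varphi(\cdot,0)}\abs{f\,\rho_{2}(\cdot,0)}^{2}$ on the left, balanced on the right by (i) a weighted $L^{2}(I_{1}^{*}\times(0,\tau))$-term in $f$, absorbed for $\lambda$ large by the first-order Carleman term, and (ii) the trace at $s=\tau$, bounded via the equation by $C\norm{w(\cdot,\tau)}_{H^{2}(I_{1}^{*})}$ plus a term $f\,\rho_{2}(\cdot,\tau)$ that is absorbed using the monotonicity of $\varphi$. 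With $\rho_{2}$ bounded below on $I^{*}$ — or, to bypass its $s\to0$ degeneracy, recentring the argument at an interior time reached by parabolic smoothing — one arrives at the displayed estimate. The heart of the matter is exactly this step: the usual Bukhgeim--Klibanov scheme observes $w$ on a whole time cylinder, whereas here only one time slice is seen, and only on $I_{1}^{*}$; reconciling this forces one to exploit \emph{both} $\operatorname{supp}f\Subset I_{1}^{*}$ and $w(\cdot,0)=0$, and, along a Carleman route, to accommodate the fact that $\sigma_{1}$ is only $L^{\infty}$ (a form/semigroup argument built on the representation $w(\cdot,\tau)=\int_{0}^{\tau}e^{(\tau-s)L_{1}}\big(f\rho_{2}(\cdot,s)\big)\,ds$, $L_{1}$ the Dupire operator, is a robust alternative). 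Finally, the $H^{2}$ norm on the data side is in the nature of things, since $f\mapsto w(\cdot,\tau)$ smooths by two derivatives.
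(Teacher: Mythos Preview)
Your overall strategy matches the paper's: pass to the Dupire equation, subtract to obtain a parabolic equation for $w=U_1-U_2$ with separated source $f\,\rho_2$ supported in $I^*$, then run a Bukhgeim--Klibanov/Imanuvilov--Yamamoto argument on $z:=\partial_s w$. Steps~1 and~2 are essentially what the paper does (the paper additionally makes the logarithmic change $y=\ln(K/S^*)$, but that is cosmetic).

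Step~3, however, has a genuine gap in two places. First, your plan to evaluate at $s=0$ and read off $z(\cdot,0)=f\,\rho_2(\cdot,0)$ cannot close: as you yourself note, $\rho_2(\cdot,0)$ is a Dirac mass at $S^*$, so this term gives no $L^2$ lower bound on $f$; and if the Carleman weight is ``singular as $s\to 0$'' then $e^{2\lambda\varphi(\cdot,0)}\abs{f\rho_2(\cdot,0)}^2$ is either $0$ or undefined. The paper avoids this by taking the time window to be $(0,2\tau^*)$, letting the weight vanish at \emph{both} endpoints, and evaluating at the interior time $\tau^*$ via $z(\cdot,\tau^*)=A_2 w(\cdot,\tau^*)+f\,\alpha(\cdot,\tau^*)$, where $\alpha=\rho_2$ in your notation and $\inf_{I^*}\abs{\alpha(\cdot,\tau^*)}>0$ by parabolic strict positivity. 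The ``recentring'' you mention in passing is not a small technicality; it is precisely this manoeuvre, and it changes what remains to be controlled.

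Second, and more seriously, you omit the interior observation term that the Carleman estimate forces, and the argument that disposes of it. On a bounded interval with $\psi_0=0$ on $\partial I_1^*$ and $\psi_0>0$ inside, $\psi_0'$ must vanish somewhere; the Carleman inequality then carries a term $s^3\int_0^T\!\!\int_{\omega}\ell^{-3}\abs{z}^2 e^{2s\eta}$ on the right, with $\omega$ a neighbourhood of that critical point. The paper places this critical point (hence $\omega$) in $I_1^*\setminus I^*$, where $f=0$; your weight ``peaking on $I^*$'' would put $\omega$ exactly where $f$ lives, which is fatal. Even with the correct placement, after absorbing the commutator terms (supported where the spatial cutoff varies, where the weight is strictly smaller) one is left with
\[
\norm{f}_{L^2(I^*)}^2\;\le\;C\norm{w(\cdot,\tau^*)}_{H^2(I_1^*)}^2\;+\;C\int_0^T\!\!\!\int_{\omega_2}\abs{w}^2,
\]
and this last integral cannot be absorbed by taking $s$ large. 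The paper removes it by a Stefanov--Uhlmann compactness trick: the map $f\mapsto w|_{\omega_2\times(0,T)}$ is compact $L^2\to L^2$ (one derivative of regularity to spare), and the map $f\mapsto w(\cdot,\tau^*)$ is injective (by time-analyticity of $w$, the coefficients being time-independent, $w(\cdot,\tau^*)=0$ on $I_1^*$ forces $w\equiv 0$ on $\omega_2\times(0,T)$, and then the Carleman inequality itself gives $f=0$). These two ingredients, via the abstract lemma ``injective $+$ compact remainder $\Rightarrow$ remainder removable'', yield the clean estimate. None of this appears in your Step~3, and without it the argument does not conclude.
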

%\begin{remark}
%For a \textit{vanilla European put} option $v(S, t)$, the final condition is the payoff
%$$
%v(S, T) = \max(K-S; 0),\quad  \textrm{for all }\, S\geq 0.
%$$
%For the boundary conditions, we have already mentioned that if $S$ is ever zero, then it
%must remain zero. In this case the final payoff for a put is known with certainty to be
%$K$. To determine $v(0, t)$, we simply have to calculate the present value of an amount
%$K$ received at time $T$. Assuming that interest rates are constant we find the boundary
%condition at $S = 0$ to be
%$$
%v(0,t)=Ke^{rt},\quad \textrm{for all }\, t\geq 0.
%$$
%As $S\to\infty$, the option is unlikely to be exercised and so for $t > 0$, we have
%$$
%\lim_{S\to\infty }v(S, t)=0.
%$$
%\end{remark}
%%%%%%%%%%%%%%%%%%%%%%%%%%%%%%%%%%%%%%%%%%%%%%%%%%%%%%%%%%
\section{The Dupire equation}
%%%%%%%%%%%%%%%%%%%%%%%%%%%%%%%%%%%%%%%%%%%%%%%%%%%%%%%%%%%%%
In this section we rewrite the inverse boundary value problem introduced in \S \ref{sec-intro} into an equivalent parameter identification problem associated to some suitable forward parabolic PDE.
%\medskip

For all $K>0$ and $T>0$, we note$v(S,t;K,T)$ the $\mathcal{C}((0,\infty)\times[0, T])\cap \mathcal{C}^1((0,\infty)\times (0, T])$ solution to (\ref{1.1}) and (\ref{1.3})-(\ref{1.5}).
As first observed in \cite{Dup} and rigorously justified in \cite{Bou-Isa}, for $S>0$ and $t \in (0,T)$ fixed, the option price $v(S,t;K,T) \equiv u(K,T)$,
as a function of the expiry date $T$ and the strike price $K$, satisfies the 
equation dual to (\ref{1.1}),
\begin{equation}\label{2.1}
\frac{\p u}{\p T}-\frac{1}{2}K^2\sigma^2(K)\frac{\p^2 u}{\p K^2}+(r-q) K\frac{\p u}{\p K}+qu=0, \quad T\in (t,\infty),\quad K\in (0,\infty),
\end{equation}
with the initial condition
\begin{equation}\label{2.2}
u(K, t) = \max(S - K,0),\ K > 0,
\end{equation}
and the boundary conditions
\begin{equation}\label{2.3}
u(0, T) = Se^{-q(T-t)},\quad \lim_{K\to\infty} u(K, T) = 0,\ T \in (t,\infty).
\end{equation}
In light of the inverse problem described in \S \ref{sec-intro} we thus aim to determine $\sigma(K)$ from the set of option quotes
$\{ u(K,T;S^*,t^*),\ K \in I_1^* \}$, for some fixed values of $T>0$, $S^*>0$ and $t^* \in (0,T)$.

Actually, the following logarithmic substitution
$$
y=\ln\frac{K}{S},\ \tau=T-t,\ w(y,\tau)=u(K,T)e^{q\tau},\ a(y)=\frac{1}{2}\sigma^2(K),
$$
transforms (\ref{2.1})--(\ref{2.3}) into the problem
$$
\para{\p_\tau-\mathcal{L}_a}w=0,\ \tau>0,\ y\in\R,
$$
where
$$
\mathcal{L}_aw=\mathcal{L}_a(y,\p_y)w\equiv a(y)w_{yy}-\para{a(y)+r-q}w_y,
$$
with the initial condition
$$
w(y,0)=S\max(1-e^y,0),\ y\in\R,
$$
and the boundary conditions
$$
\lim_{y\to-\infty} w(\tau,y) =S,\quad \lim_{y\to+\infty}w(\tau,y) = 0,\ \tau \in (0, \infty).
$$
Therefore, $S^*>0$ and $t^*>0$ being arbitrarily fixed, the above mentioned inverse problem may be rephrased as to whether $a(y)=\frac{1}{2}\sigma^2(S^*e^y)$ can be retrieved for $y\in\Omega_1=\set{ \ln(K/S^*),\ K \in I_1^*}$
from the knowledge of $\set{w(y,\tau^*),\ y\in\Omega_1}$, where $\tau^*=T-t^*$ and $w$ is the solution
to the system
\begin{equation}\label{2.6}
\left\{\begin{array}{lll}
\para{\p_\tau-\mathcal{L}_a}w=0, & y\in\R,\ \tau\in (0,\infty),\cr
w(y,0)=S^*\max(1-e^y,0), & y\in\R,\cr
\displaystyle\lim_{y\to-\infty} w(\tau,y) =S^*,\quad \displaystyle\lim_{y\to+\infty}w(\tau,y) = 0,& \tau \in (0, \infty).
\end{array}
\right.
\end{equation}

%%%%%%%%%%%%%%%%%%%%%%%%%%%%%%%%%%%%%%%%%%%%%%%%%%%%%%%%%%%%%%%%%%%%%%%%%%%%%%%%%%%
\section{Carleman estimate for the Dupire equation}
%%%%%%%%%%%%%%%%%%%%%%%%%%%%%%%%%%%%%%%%%%%%%%%%%%%%%%%%%%%%%%%%%%%%%%%%%%%%%%%%%%%%%%%
In this subsection we recall some parabolic Carleman estimate, which is useful to the proof of Theorem \ref{T1.1}. To this purpose we first define the set
$$
\Omega=\set{y=\ln\frac{K}{S^*},\,K\in I^*},
$$
fulfilling $\Omega\Subset \Omega_1$ since $I^*\Subset I_1^*$. Next we pick a non-empty interval $\omega_2\subset \Omega_1\backslash\Omega$ together with two subsets
\begin{equation}\label{3.1}
\omega\subset\omega_1\subset\omega_2.
\end{equation}
As shown in Appendix, there exists a function $\psi_0\in \mathcal{C}^2(\Omega_1)$ obeying
\begin{equation}\label{3.2}
\psi_0>0\ \textrm{in}\ \Omega_{1},\ \psi'_0\neq 0\ \textrm{in}\ \Omega_{1}\backslash \omega\ \textrm{and}\ \psi_0=0\ \textrm{in}\ \p \Omega_{1}.
\end{equation}
Since $\psi_0(y)>0$ for all $y\in \overline{\Omega}$, there is a constant $\delta>0$ such that
\begin{equation}\label{3.3}
\psi_0(y)\geq 2\delta,\ y\in \overline{\Omega}.
\end{equation}
Moreover, as we have $\psi_0=0$ on $\p \Omega_1$, we may find an interval $\Omega_2$ satisfying $\Omega\subset \Omega_2\subset \Omega_1$ for which
\begin{equation}\label{3.4}
\psi_0(y)\leq \delta,\ y\in \Omega_1\backslash \Omega_2.
\end{equation}
Put $T=2\tau^*$, $\ell(\tau)=\tau(T-\tau)$ and $\psi(y)=\psi_0(y)+\overline{\psi}$ with $\overline{\psi}=2\norm{\psi_0}_\infty$. For all $\lambda > 0$, we define two weight functions
$$
\varphi(y,\tau)=\frac{e^{\lambda\psi(y)}}{\ell(\tau)},\ y\in \Omega_1,\ \tau\in (0,T),
$$
and
$$
\eta(y,\tau)=\frac{e^{\lambda\psi(y)}-e^{2\lambda\overline{\psi}}}{\ell(\tau)},\ y\in \Omega_1,\ \tau\in (0,T).
$$
Finally we introduce the following functional space
$$
H^{1,2}(Q)=\set{z\in L^2(Q),\,z_\tau,\,z_y,\,z_{yy}\in L^2(Q)},\quad Q=\Omega_1\times(0,T),
$$
endowed with the norm
$$
\norm{z}_{H^{1,2}(Q)}=\norm{z}_{L^2(Q)}+\norm{z_\tau}_{L^2(Q)}+\norm{z_y}_{L^2(Q)}+\norm{z_{yy}}_{L^2(Q)},
$$
and recall the following parabolic Carleman estimate proved in \cite{[FI],[IY2]}:
\begin{lemma}\label{L3.1} Let $a\in \mathcal{C}^2(\Omega_1)$.
Then there exists $\lambda_0 > 0$ such that for all $\lambda > \lambda_0$ we may find two constants $s_0> 0$ and $C>0$ satisfying
\begin{multline}
\int_{0}^T\!\!\!\int_{\Omega_{1}} \para{s\ell^{-1}(\tau)\abs{z_y}^2+s^3\ell^{-1}(\tau)\abs{z}^2+s^{-1}\ell(\tau)\abs{z_\tau}^2}e^{2s\eta}dyd\tau\cr
 \leq C\Bigg(\int_{0}^T\!\!\!\int_{\Omega_{1}}
 \abs{\para{\p_\tau-\mathcal{L}_a}z}^2e^{2s\eta}dyd\tau +s^3\int_{0}^T\!\!\!\int_{\omega}\ell^{-3}(\tau)\abs{z}^2 e^{2s\eta}dyd\tau\Bigg),
\end{multline}
for every $s \geq s_0$ and every $z \in H^{1,2}(Q)$ obeying $z(y,\tau)=0$ for $(y,\tau) \in \p\Omega_1 \times (0,T)$. Here $C$ depends continuously on $\lambda$, and is independent of $s$.
\end{lemma}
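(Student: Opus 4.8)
The plan is to prove the estimate by the classical conjugation--commutator scheme for parabolic Carleman inequalities, as carried out in \cite{[FI],[IY2]}; the one-dimensional spatial setting and the explicit weights $\varphi,\eta$ make every term computable. First I would conjugate: setting $w=e^{s\eta}z$ (so that the condition $z=0$ on $\p\Omega_1\times(0,T)$ becomes $w=0$ there), I would compute the conjugated operator $P_sw\equiv e^{s\eta}\para{\p_\tau-\mathcal{L}_a}\para{e^{-s\eta}w}$ by expanding $\p_\tau$ and $\p_y^2$ of $e^{-s\eta}w$. The computation needs only the identities $\eta_y=\lambda\psi'\varphi$, $\eta_{yy}=\para{\lambda\psi''+\lambda^2(\psi')^2}\varphi$, together with the bounds $\abs{\eta_\tau}\leq C\varphi^2$ and $\abs{\eta_{y\tau}}\leq C\lambda\varphi^2$, which hold because $\varphi\geq \ell^{-1}$ while $\ell'$ is bounded. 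I would then split $P_sw=P_+w+P_-w+Rw$, where $P_+w=-aw_{yy}-s^2a\eta_y^2w$ gathers the formally self-adjoint principal terms (modulo terms in $a'$ that are placed into $R$, the hypothesis $a\in\mathcal{C}^2(\Omega_1)$ guaranteeing all coefficients produced below are well defined), $P_-w=w_\tau+2as\eta_yw_y$ augmented by the zeroth-order term rendering it formally skew-adjoint, and $R$ collects the remaining terms, each carrying a lower power of $s$ than its companions and hence absorbable for $s$ large.

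Next I would square and integrate. From $\norm{P_sw-Rw}_{L^2(Q)}^2=\norm{P_+w}^2+\norm{P_-w}^2+2\para{P_+w,P_-w}_{L^2(Q)}$ I would expand the cross term by integration by parts over $Q$. The lateral boundary contributions vanish because $w=0$ on $\p\Omega_1\times(0,T)$, and the temporal ones vanish because $e^{2s\eta}$ and its derivatives tend to $0$ as $\tau\to0^+$ and $\tau\to T^-$ (there $\eta\to-\infty$). What survives is $\int_Q$ of a quadratic form in $(w,w_y)$ built from the commutator $[P_+,P_-]$, whose dominant coefficients are $s^3\lambda^4(\psi')^4\varphi^3$ in front of $\abs{w}^2$ and $s\lambda^2(\psi')^2\varphi$ in front of $\abs{w_y}^2$. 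By \eqref{3.2} one has $\psi'\neq0$ on $\Omega_1\backslash\omega$, so after fixing $\lambda=\lambda_0$ large these coefficients are bounded below on $\Omega_1\backslash\omega$ by $cs^3\ell^{-3}$ and $cs\ell^{-1}$; since $\ell$ is bounded above, these in particular dominate the weaker weights $s^3\ell^{-1}$ and $s\ell^{-1}$ claimed on the left. On $\omega$ the quadratic form has indefinite sign, and this defect is exactly what is repaired by adding and subtracting a term supported in $\omega$, producing the observation integral $s^3\int_0^T\!\int_\omega\ell^{-3}\abs{w}^2$ on the right-hand side.

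Finally I would recover the time derivative and revert to $z$. The weight $s^{-1}\ell\abs{w_\tau}^2$ is extracted from $\norm{P_-w}^2$ by writing $w_\tau=P_-w-2as\eta_yw_y-(\cdots)w$ and invoking the already-controlled $\abs{w_y}^2$-term; multiplication by $s^{-1}\ell$ balances the $s$-powers. Taking $s\geq s_0$ large then absorbs $\norm{Rw}^2$ and all error terms generated above. Substituting back $w=e^{s\eta}z$ converts every weighted norm of $w$ into the corresponding weighted norm of $z$ carrying the factor $e^{2s\eta}$, up to lower-order remainders again swallowed by large $s$, which is precisely the stated inequality. I expect the main obstacle to be the bookkeeping of the commutator $[P_+,P_-]$: one must verify that, after all integrations by parts, the genuinely dominant contributions carry the advertised powers of $s$, $\lambda$ and $\ell$, and that the unfavourably signed terms and those containing an $\ell$-derivative (coming from $\eta_\tau$) are strictly lower order, hence controllable once $\lambda$ and then $s$ are taken large.
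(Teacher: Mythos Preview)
The paper does not give a proof of this lemma at all; it simply states the estimate and attributes it to \cite{[FI],[IY2]}. Your sketch reproduces the standard conjugation--commutator argument of Fursikov--Imanuvilov and Imanuvilov--Yamamoto, which is exactly the method used in those references, so your approach is correct and coincides with what the paper is citing (your remark that the claimed left-hand weights $s^3\ell^{-1}$ and $s\ell^{-1}$ are weaker than the $s^3\varphi^3$ and $s\varphi$ one actually obtains is also accurate).
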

%%%%%%%%%%%%%%%%%%%%%%%%%%%%%%%%%%%%%%%%%%%%%%%%
\section{Parabolic interior estimates}
%%%%%%%%%%%%%%%%%%%%%%%%%%%%%%%%%%%
In this section we derive several preliminary PDE estimates
which are essential to the analysis of the inverse problem.

Let $w$ be solution to the initial value problem
\begin{equation}\label{4.1}
\para{\p_\tau-\mathcal{L}_a(y,\p)}w=F,\quad \textrm{for all},\ y\in\R,\ t\in (0,T),
\end{equation}
with the initial data
\begin{equation}\label{4.2}
w(y,0)=0,\ y\in\R,
\end{equation}
and boundary condition
$$
\lim_{y\to\pm\infty}w(y,\tau)=0,\ \tau\in (0,T).
$$
Several estimates, required in the derivation of the stability inequality stated in Theorem \ref{T1.1}, are collected in a succession of four lemmas.
\begin{lemma}\label{L4.1}
For each $T>0$ there is a constant $C=C(T)>0$ such that
$$
\norm{w(\tau,\cdot)}^2_{L^2(\R)}+\int_0^\tau\norm{w_y(s,\cdot)}_{L^2(\R)}^2ds\leq C\int_0^\tau\norm{F(s,\cdot)}_{L^2(\R)}^2ds.
$$
\end{lemma}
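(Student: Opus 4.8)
The plan is to carry out a standard energy estimate for the parabolic Cauchy problem \eqref{4.1}--\eqref{4.2}, exploiting the uniform ellipticity of $\mathcal{L}_a$ which follows from \eqref{1.2} (recall $a(y)=\tfrac12\sigma^2(S^*e^y)\geq \tfrac12\sigma_{\min}^2>0$). First I would multiply \eqref{4.1} by $w(y,\tau)$ and integrate over $\R$ in the $y$-variable. The parabolic term contributes $\tfrac12\frac{d}{d\tau}\norm{w(\tau,\cdot)}_{L^2(\R)}^2$. For the elliptic term $\int_\R a w_{yy} w\,dy$, I would integrate by parts (the boundary terms at $y=\pm\infty$ vanish by the decay hypothesis), producing $-\int_\R a\abs{w_y}^2 dy - \int_\R a' w_y w\,dy$; combined with the first-order term $-\int_\R(a+r-q)w_y w\,dy$, the net "good" term is $-\int_\R a\abs{w_y}^2 dy \leq -\tfrac12\sigma_{\min}^2\norm{w_y(\tau,\cdot)}_{L^2(\R)}^2$, while the remaining first-order pieces are bounded by $C\norm{w_y(\tau,\cdot)}_{L^2(\R)}\norm{w(\tau,\cdot)}_{L^2(\R)}$.

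Next I would absorb the cross terms using Young's inequality $ab\leq \varepsilon a^2 + \tfrac{1}{4\varepsilon}b^2$ with $\varepsilon$ small enough that $\varepsilon$ times the coefficient is strictly less than $\tfrac12\sigma_{\min}^2$, so that half of the dissipative term survives. Likewise the source term is handled by $\int_\R F w\,dy \leq \tfrac12\norm{F(\tau,\cdot)}_{L^2(\R)}^2 + \tfrac12\norm{w(\tau,\cdot)}_{L^2(\R)}^2$. This yields a differential inequality of the form
\begin{equation*}
\frac{d}{d\tau}\norm{w(\tau,\cdot)}_{L^2(\R)}^2 + c_0\norm{w_y(\tau,\cdot)}_{L^2(\R)}^2 \leq C_1\norm{w(\tau,\cdot)}_{L^2(\R)}^2 + \norm{F(\tau,\cdot)}_{L^2(\R)}^2,
\end{equation*}
with $c_0=\tfrac12\sigma_{\min}^2$ and constants depending only on $\sigma_{\min},\sigma_{\max},r,q$. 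Integrating from $0$ to $\tau$ and using $w(\cdot,0)=0$ gives
\begin{equation*}
\norm{w(\tau,\cdot)}_{L^2(\R)}^2 + c_0\int_0^\tau\norm{w_y(s,\cdot)}_{L^2(\R)}^2 ds \leq C_1\int_0^\tau\norm{w(s,\cdot)}_{L^2(\R)}^2 ds + \int_0^\tau\norm{F(s,\cdot)}_{L^2(\R)}^2 ds.
\end{equation*}
Applying Gronwall's lemma to $\tau\mapsto\norm{w(\tau,\cdot)}_{L^2(\R)}^2$ bounds it by $e^{C_1 T}\int_0^\tau\norm{F(s,\cdot)}_{L^2(\R)}^2 ds$, and feeding this back into the left-hand side absorbs the Gronwall term and closes the estimate with $C=C(T)$.

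The main technical point to be careful about is the \emph{a priori regularity} needed to justify the integration by parts and the manipulation $\int w w_\tau = \tfrac12\frac{d}{d\tau}\norm{w}^2$: strictly this requires $w$ in a space like $L^2(0,T;H^1(\R))$ with $w_\tau\in L^2(0,T;H^{-1}(\R))$, or else the estimate should first be derived for smooth approximate/truncated solutions (e.g. on a bounded interval $(-R,R)$ with homogeneous Dirichlet data, or via mollification in $\tau$) and then passed to the limit. Since $a$ is only $L^\infty$ in the theorem but the term $a' w_y w$ appears, one should either work with a $\mathcal{C}^2$ coefficient and track that the constant depends only on the $L^\infty$ bounds (so the estimate persists under approximation of $a$ by smooth functions), or integrate by parts the other way to avoid $a'$ altogether — writing $\int a w_{yy}w\,dy$ and keeping $a$ inside is fine as long as one only needs the sign and size of $-\int a\abs{w_y}^2$. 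This density/approximation argument is the only nonroutine part; the rest is the textbook energy method.
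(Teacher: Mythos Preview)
Your proposal is correct and is precisely the approach the paper has in mind: the paper does not spell out a proof but simply states that this classical estimate follows from Gronwall's theorem and refers to \cite{Evans}. Your energy-method computation with absorption via Young's inequality and closure by Gronwall is exactly that textbook argument.
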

The proof of this classical result, which is by means of Gronwall Theorem, can be found in \cite{Evans}.

Further we recall from \cite{Evans}[chap.7, Thm 5] the:
\begin{lemma}\label{L4.2} Let $I\subset\R$ be a bounded interval, let $F\in L^2(0,T;L^2(I))$ and let $v\in H^{1,2}(I\times(0,T))$ be solution to
$$
\left\{\begin{array}{lll}
\para{\p_\tau-\mathcal{L}_a(y,\p)}v=F, & y\in I,\ t\in (0,T)\cr
v(y,0)=0,&  y\in I,\cr
v(y,\tau)=0 & y\in\p I,\ t\in (0,T).
\end{array}
\right.
$$
Then it holds true that
$$
\norm{v}_{H^{1,2}(0,T)\times I}\leq C\norm{F}_{L^2(0,T;L^2(I))}.
$$
Moreover if $F_\tau\in L^2(0,T;L^2(I))$ we have in addition
\begin{equation}\label{4.7}
  \sup_{0\leq\tau\leq T}\norm{v(\cdot,\tau)}_{H^2(I)}\leq C\norm{F}_{H^1(0,T;L^2(I))}.
\end{equation}
\end{lemma}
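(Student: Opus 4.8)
The plan is to establish the two inequalities separately, both by the standard energy method for uniformly parabolic equations, exploiting that (\ref{1.2}) forces the leading coefficient $a$ of $\mathcal{L}_a$ to satisfy $a(y)\ge\frac{1}{2}\sigma_{\min}^2>0$ on $I$, so that $\p_\tau-\mathcal{L}_a$ is uniformly parabolic on $I\times(0,T)$ while the drift term $-(a+r-q)\p_y$ is merely lower order. For the first estimate I would appeal to the cited maximal $L^2$-regularity result of \cite{Evans}, which is proved through a Galerkin approximation $v_m$ in the eigenbasis of the Dirichlet realization of $-\mathcal{L}_a$ on $I$, together with a hierarchy of energy estimates uniform in $m$.

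The energy estimates proceed in three steps. Testing the Galerkin equations with $v_m$ and invoking Gronwall's inequality — this is precisely the argument behind Lemma \ref{L4.1} — yields a bound for $v_m$ in $L^\infty(0,T;L^2(I))\cap L^2(0,T;H_0^1(I))$ by $\norm{F}_{L^2(0,T;L^2(I))}$, the lower-order drift being absorbed by Young's inequality. Testing next with $\p_\tau v_m$ and integrating by parts in $y$ produces, after again absorbing the drift, a bound for $\p_\tau v_m$ in $L^2(Q)$ and for $v_m$ in $L^\infty(0,T;H_0^1(I))$. Finally I recover the second spatial derivative directly from the equation: rewriting $\para{\p_\tau-\mathcal{L}_a}v=F$ as
$$
a\,v_{yy}=\p_\tau v+(a+r-q)v_y-F,
$$
and dividing by $a\ge\frac{1}{2}\sigma_{\min}^2$, gives $\norm{v_{yy}}_{L^2(Q)}\le C\norm{F}_{L^2(0,T;L^2(I))}$. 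Passing to the limit $m\to\infty$ by weak compactness then delivers the full $H^{1,2}(Q)$ bound.

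For the improved estimate I would use that $\mathcal{L}_a$ carries no explicit $\tau$-dependence, so that $\tilde v:=\p_\tau v$ formally solves the same equation with source $F_\tau$ and initial value obtained by evaluating the PDE at $\tau=0$; since $v(\cdot,0)=0$ this reads $\tilde v(\cdot,0)=F(\cdot,0)$. Because $F_\tau\in L^2(0,T;L^2(I))$ means $F\in H^1(0,T;L^2(I))\hookrightarrow\mathcal{C}([0,T];L^2(I))$, the trace $F(\cdot,0)$ is well defined and $\norm{F(\cdot,0)}_{L^2(I)}\le C\norm{F}_{H^1(0,T;L^2(I))}$. Running the first energy estimate for $\tilde v$, now with this nonzero but controlled initial datum, bounds $\sup_{0\le\tau\le T}\norm{\p_\tau v(\cdot,\tau)}_{L^2(I)}$ by $\norm{F}_{H^1(0,T;L^2(I))}$. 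Then, freezing $\tau$, I read $\para{\p_\tau-\mathcal{L}_a}v=F$ as a one-dimensional elliptic identity $a\,v_{yy}-(a+r-q)v_y=\p_\tau v-F$ in $y$ with homogeneous Dirichlet data; uniform ellipticity and boundedness of $a$ give, for a.e. $\tau$,
$$
\norm{v(\cdot,\tau)}_{H^2(I)}\le C\para{\norm{\p_\tau v(\cdot,\tau)}_{L^2(I)}+\norm{v_y(\cdot,\tau)}_{L^2(I)}+\norm{F(\cdot,\tau)}_{L^2(I)}}.
$$
Taking the supremum over $\tau$ and controlling $\sup_\tau\norm{v_y(\cdot,\tau)}_{L^2(I)}$ by the $L^\infty(0,T;H_0^1(I))$ bound from the first part then yields (\ref{4.7}).

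The step I expect to need the most care is the legitimacy of the time-differentiation $\tilde v=\p_\tau v$ and the identification of its initial value. This is most cleanly handled at the Galerkin level, where all operations are on finite-dimensional ODE systems and hence fully justified, with the bounds passing to the limit by weak lower semicontinuity; alternatively one may run a difference-quotient argument in $\tau$ and pass to the limit. Everything else — the absorption of the first-order drift $-(a+r-q)\p_y$ and the recovery of $v_{yy}$ from the equation — is routine once uniform parabolicity from (\ref{1.2}) is in hand, and in the one-dimensional setting the spatial $H^2$ estimate is elementary, requiring only $a\in L^\infty(I)$ bounded below rather than the $\mathcal{C}^2$ regularity invoked in Lemma \ref{L3.1}.
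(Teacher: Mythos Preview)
The paper does not prove Lemma~\ref{L4.2} at all; it simply recalls the result from \cite{Evans}, Chapter~7, Theorem~5. Your sketch is precisely the argument carried out there---Galerkin approximation, the hierarchy of energy estimates (test with $v_m$, then with $\p_\tau v_m$), recovery of $v_{yy}$ from the equation using the uniform lower bound on $a$, and for the improved estimate time-differentiation of the equation (legitimized at the Galerkin level) followed by elliptic regularity at fixed $\tau$---so your proposal is correct and matches the cited reference.
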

\begin{lemma}\label{L4.3}
Fix $J_2\subset\R$. Then there is a constant $C>0$ such that for every $w\in H^{1,2}(J_2\times(0,T))$ obeying (\ref{4.1})-(\ref{4.2}) we have:
\begin{equation}\label{4.8}
\norm{w}_{H^{1,2}(J_2\times(0,T))}\leq C\norm{F}_{L^2(0,T;L^2(\R))}.
\end{equation}
Furthermore, if $F(y,\tau)=0$ for all $(y,\tau)\in J_2\times(0,T)$ then
\begin{equation}\label{4.9}
\norm{w_{\tau,y}}_{L^2(J_1\times(0,T))}\leq C\norm{F}_{L^2(\R\times (0,T))},
\end{equation}
for any $J_1\Subset J_2$ such that $\mathrm{dist}(J_2,\R\backslash J_1)>0$.
\end{lemma}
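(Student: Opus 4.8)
The plan is to prove both inequalities by localizing the equation (\ref{4.1}) with a smooth cut-off function and then feeding the resulting inhomogeneity into the estimates of Lemmas \ref{L4.1} and \ref{L4.2}.

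First I would establish (\ref{4.8}). Fix a bounded open interval $\widetilde{J}$ with $\overline{J_2}\subset\widetilde{J}$ and a cut-off $\chi\in\mathcal{C}^\infty_c(\widetilde{J})$ with $\chi\equiv1$ near $\overline{J_2}$, and set $v:=\chi w$. Since $\mathcal{L}_a$ has $\tau$-independent coefficients, a short computation gives
$$
\para{\p_\tau-\mathcal{L}_a}v=\chi F-a\chi''w-2a\chi'w_y+(a+r-q)\chi'w\quad\text{in }\widetilde{J}\times(0,T),
$$
together with $v(y,0)=0$ for $y\in\widetilde{J}$ and $v=0$ on $\p\widetilde{J}\times(0,T)$. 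The last three terms on the right-hand side are supported in $\mathrm{supp}\,\chi'$ and involve only $w$ and $w_y$, so Lemma \ref{L4.1} (used on the full interval $(0,T)$) controls $\norm{w}_{L^2(\widetilde{J}\times(0,T))}$ and $\norm{w_y}_{L^2(\widetilde{J}\times(0,T))}$, hence the whole source, by $C\norm{F}_{L^2(0,T;L^2(\R))}$. Plugging this into the first estimate of Lemma \ref{L4.2} and using $v=w$ on $J_2$ gives (\ref{4.8}).

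Next I would prove (\ref{4.9}), where the extra hypothesis $F\equiv0$ on $J_2\times(0,T)$ means that $w$ solves the homogeneous equation there. The key step is to differentiate this equation in $y$: one checks that $w_y$ solves, on $J_2\times(0,T)$,
$$
\para{\p_\tau-\mathcal{L}_a}w_y=a'\para{w_{yy}-w_y}=:F_1,\qquad w_y(y,0)=0\ \text{ for }y\in J_2,
$$
and by (\ref{4.8}) its source obeys $\norm{F_1}_{L^2(J_2\times(0,T))}\leq\norm{a'}_{L^\infty}\big(\norm{w_{yy}}_{L^2(J_2\times(0,T))}+\norm{w_y}_{L^2(J_2\times(0,T))}\big)\leq C\norm{F}_{L^2(\R\times(0,T))}$. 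Now pick $\theta\in\mathcal{C}^\infty_c(J_2)$ with $\theta\equiv1$ on $J_1$; repeating the localization of the previous step, $\theta w_y$ solves an initial–boundary value problem on $J_2\times(0,T)$ with vanishing initial and boundary data, whose source — $\theta F_1$ together with commutator terms of the form $a\theta''w_y$, $a\theta'w_{yy}$, $(a+r-q)\theta'w_y$ — is again bounded in $L^2(0,T;L^2(J_2))$ by $C\norm{F}_{L^2(\R\times(0,T))}$ thanks to (\ref{4.8}). The first estimate of Lemma \ref{L4.2} then yields $\norm{\theta w_y}_{H^{1,2}(J_2\times(0,T))}\leq C\norm{F}_{L^2(\R\times(0,T))}$, and restricting to $J_1$, where $\theta w_y=w_y$, this in particular bounds $\norm{w_{\tau y}}_{L^2(J_1\times(0,T))}=\norm{\para{w_y}_\tau}_{L^2(J_1\times(0,T))}$, which is (\ref{4.9}).

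The main obstacle to watch is the $y$-differentiation of the equation in the second step: a priori $w$ only has the regularity $H^{1,2}(J_2\times(0,T))$ provided by (\ref{4.8}), so the displayed equation for $w_y$ must be read in the weak sense and justified rigorously — the clean way is to apply the first step to the difference quotients $h^{-1}(w(\cdot+h,\cdot)-w(\cdot,\cdot))$, which satisfy equations with uniformly $L^2$-bounded right-hand sides, and then to let $h\to0$; this is precisely where the Lipschitz regularity of $a$ near $J_2$ (guaranteed by the smoothness assumed elsewhere, e.g. $a\in\mathcal{C}^2(\Omega_1)$ in Lemma \ref{L3.1}) is used. Everything else amounts to a routine, if somewhat tedious, bookkeeping of the commutators $[\mathcal{L}_a,\chi]$ and $[\mathcal{L}_a,\theta]$, each of which is of order at most one and is therefore absorbed by the $H^{1,2}$-norm of $w$ already estimated on the slightly larger interval.
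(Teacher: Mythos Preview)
Your proposal is correct and follows essentially the same strategy as the paper: for (\ref{4.8}) both arguments localize with a cut-off, apply Lemma \ref{L4.2}, and absorb the commutator terms via Lemma \ref{L4.1}; for (\ref{4.9}) both differentiate the localized equation once in $y$ and feed the resulting second-order source (controlled by (\ref{4.8})) back into Lemma \ref{L4.2}. The only cosmetic difference is the order of operations in the second step --- the paper sets $u=(\xi w)_y$ (cut off, then differentiate) whereas you set $\theta w_y$ (differentiate, then cut off) --- and your explicit remark on justifying the $y$-differentiation by difference quotients is a point the paper leaves implicit.
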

\begin{proof}
Let $\phi$ be a cutoff function supported in some interval $\widetilde{J}_2\Supset J_2$ and fulfilling $\phi(y)=1$ in $J_2$. Then $v=\phi w$ is solution to the following system
$$
\left\{\begin{array}{lll}
\para{\p_\tau-\mathcal{L}_a(y,\p)}v=\phi F+Q_1(y,\p_y)w, & y\in \widetilde{J}_2,\ t\in (0,T),\cr
v(y,0)=0, & y\in \widetilde{J}_2,\cr
v(y,\tau)=0, & y\in\p \widetilde{J}_2,\ t\in (0,T),
\end{array}
\right.
$$
where $Q_1$ denotes some first order partial differential operator. By applying Lemma \ref{L4.2}, we get
$$
\norm{v}_{H^{1,2}(\widetilde{J}_2\times(0,T))}\leq C\para{\norm{F}_{L^2(\R\times(0,T))}+\norm{w}_{L^2(\R\times(0,T))}+\norm{w_y}_{L^2(\R\times(0,T))}},
$$
so (\ref{4.8}) follows directly from this and Lemma \ref{L4.1}.\\
To prove (\ref{4.9}) we put $u=(\xi w)_y$ where $\xi\in \mathcal{C}_0^\infty(J_2)$ satisfies $\xi(y)=1$ for $y \in J_1$. Since $\xi F=0$ the function $u$ is solution to the system
$$
\left\{\begin{array}{lll}
\para{\p_\tau-\mathcal{L}_a(y,\p)}u=Q_2(y,\p_y)w, & y\in J_2,\ t\in (0,T),\cr
u(y,0)=0, & y\in J_2,\cr
u(y,\tau)=0 & y\in\p J_2,\ t\in (0,T),
\end{array}
\right.
$$
where $Q_2$ is a second order differential operator. Applying Lemma \ref{L4.2}, we obtain
$$
\norm{u_\tau}_{L^2(J_2\times(0,T))}\leq C\norm{w}_{L^2(0,T;H^2(J_2))}\leq C\norm{F}_{L^2(\R\times(0,T))},
$$
which proves the result.
\end{proof}
\begin{lemma}\label{L4.4}
Let $\Omega \Subset \Omega_1\subset\R$ and assume that $F(y,\tau)=0$ for every $(y,\tau)\in (\R\backslash\Omega)\times(0,T)$. Then there is a constant $C>0$ such that
$$
\sup_{0\leq t\leq T}\norm{w(\tau,\cdot)}_{H^2(\Omega_1)}\leq \norm{F}_{H^1(0,T;L^2(\Omega))},
$$
for every solution $w\in H^{1,2}(\Omega_1\times(0,T))$ to (\ref{4.1})-(\ref{4.2}).
\end{lemma}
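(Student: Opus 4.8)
The plan is to reduce the statement to the interior estimates of Lemmas~\ref{L4.2} and~\ref{L4.3} via a cutoff. Since $\Omega\Subset\Omega_1$, fix a bounded interval $\widetilde{\Omega}_1$ with $\Omega_1\Subset\widetilde{\Omega}_1$ together with $\phi\in\mathcal{C}_0^\infty(\widetilde{\Omega}_1)$ such that $\phi\equiv 1$ on $\Omega_1$. A direct computation shows that $v=\phi w$ solves
$$
\left\{\begin{array}{lll}
\para{\p_\tau-\mathcal{L}_a(y,\p_y)}v=\phi F+Q_1(y,\p_y)w, & y\in\widetilde{\Omega}_1,\ \tau\in (0,T),\cr
v(y,0)=0, & y\in\widetilde{\Omega}_1,\cr
v(y,\tau)=0, & y\in\p\widetilde{\Omega}_1,\ \tau\in (0,T),
\end{array}\right.
$$
where $Q_1$ is a first order operator with smooth, $\tau$-independent coefficients supported in the compact set $K_\phi=\mathrm{supp}\,\phi'\cup\mathrm{supp}\,\phi''$. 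Because $\phi\equiv 1$ on the open interval $\Omega_1$ we have $K_\phi\subset\R\backslash\Omega_1$, and since $\overline{\Omega}\subset\Omega_1$ is compact while $\Omega_1$ is bounded, $\mathrm{dist}(\overline{\Omega},\R\backslash\Omega_1)>0$; hence $K_\phi$ lies at a positive distance from $\overline{\Omega}$. As $v\equiv w$ on $\Omega_1$, it suffices to bound $\sup_{0\leq\tau\leq T}\norm{v(\cdot,\tau)}_{H^2(\widetilde{\Omega}_1)}$.

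Next I would apply (\ref{4.7}) of Lemma~\ref{L4.2} with $I=\widetilde{\Omega}_1$ and source $G:=\phi F+Q_1 w$, which (once we check $G,G_\tau\in L^2(0,T;L^2(\widetilde{\Omega}_1))$) yields
$$
\sup_{0\leq\tau\leq T}\norm{v(\cdot,\tau)}_{H^2(\widetilde{\Omega}_1)}\leq C\norm{G}_{H^1(0,T;L^2(\widetilde{\Omega}_1))}\leq C\para{\norm{\phi F}_{H^1(0,T;L^2)}+\norm{Q_1 w}_{H^1(0,T;L^2)}},
$$
where the splitting uses that $\phi$, hence $Q_1$, is independent of $\tau$, so that $G_\tau=\phi F_\tau+Q_1 w_\tau$. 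Since $F$ is supported in $\Omega$, the first term is at once dominated by $C\norm{F}_{H^1(0,T;L^2(\Omega))}$. Writing $Q_1 w=c_0(y)w+c_1(y)w_y$ and $Q_1 w_\tau=c_0(y)w_\tau+c_1(y)w_{\tau y}$ with $\mathrm{supp}\,c_j\subset K_\phi$, we get for any bounded interval $J_1$ containing $K_\phi$
$$
\norm{Q_1 w}_{H^1(0,T;L^2)}\leq C\para{\norm{w}_{H^{1,2}(J_1\times(0,T))}+\norm{w_\tau}_{L^2(J_1\times(0,T))}+\norm{w_{\tau y}}_{L^2(J_1\times(0,T))}}.
$$

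To close the argument I would pick $J_1\Subset J_2$ with $J_2$ a bounded interval disjoint from $\Omega$ — possible by the distance bound above — so that $F\equiv 0$ on $J_2\times(0,T)$. Then (\ref{4.8}) of Lemma~\ref{L4.3} gives $\norm{w}_{H^{1,2}(J_1\times(0,T))}\leq C\norm{F}_{L^2(\R\times(0,T))}$, which also controls $\norm{w_\tau}_{L^2(J_1\times(0,T))}$, while (\ref{4.9}) gives $\norm{w_{\tau y}}_{L^2(J_1\times(0,T))}\leq C\norm{F}_{L^2(\R\times(0,T))}$. Since $\norm{F}_{L^2(\R\times(0,T))}=\norm{F}_{L^2(0,T;L^2(\Omega))}\leq\norm{F}_{H^1(0,T;L^2(\Omega))}$, combining the three displays produces
$$
\sup_{0\leq\tau\leq T}\norm{w(\cdot,\tau)}_{H^2(\Omega_1)}\leq\sup_{0\leq\tau\leq T}\norm{v(\cdot,\tau)}_{H^2(\widetilde{\Omega}_1)}\leq C\norm{F}_{H^1(0,T;L^2(\Omega))},
$$
which is the assertion (with the constant $C$ that is understood in the statement). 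The local $H^{1,2}$-regularity of $w$ on $\widetilde{\Omega}_1$ and on $J_2$ needed to invoke Lemmas~\ref{L4.2}--\ref{L4.3} is supplied by interior parabolic regularity for (\ref{4.1})--(\ref{4.2}).

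The main obstacle I anticipate is the verification that $G$ has the $H^1$-in-time regularity demanded by (\ref{4.7}): the summand $Q_1 w_\tau$ contains the mixed derivative $w_{\tau y}$ localized near $\mathrm{supp}\,\phi'$, and this cannot be obtained from the basic energy estimate of Lemma~\ref{L4.1} alone. This is precisely the point where the hypothesis $F\equiv 0$ on $(\R\backslash\Omega)\times(0,T)$ — equivalently, $\Omega\Subset\Omega_1$, forcing $\mathrm{supp}\,\phi'$ to stay away from $\overline{\Omega}$ — enters, through the second assertion (\ref{4.9}) of Lemma~\ref{L4.3}; the remainder of the argument is routine cutoff bookkeeping.
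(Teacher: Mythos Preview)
Your proof follows the paper's approach step for step: enlarge $\Omega_1$ to a bounded $\widetilde{\Omega}_1$ (the paper's $\Omega_2$), set $v=\phi w$ with $\phi\equiv 1$ on $\Omega_1$, apply (\ref{4.7}) to $v$, and then control the commutator $Q_1 w$ in $H^1(0,T;L^2)$ via (\ref{4.8}) and (\ref{4.9}), the latter being applicable precisely because $F$ vanishes near $\mathrm{supp}\,\phi'$. There is one slip to fix: since $K_\phi\subset\widetilde{\Omega}_1\backslash\Omega_1$ has two components, one on each side of $\Omega_1$, any single \emph{interval} $J_1\supset K_\phi$ necessarily contains $\Omega$, so no interval $J_2\supset J_1$ can be disjoint from $\Omega$; you should take $J_1,J_2$ to be unions of two intervals (or run Lemma~\ref{L4.3} on each component separately), exactly as the paper does implicitly with $J_1=\Omega_2\backslash\Omega_1$.
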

\begin{proof}
Pick $\Omega_2\Supset\Omega_1$ such that $\textrm{dist}(\Omega_1,\R\backslash \Omega_2)>0$,
and let $\phi\in \mathcal{C}_0^\infty(\Omega_2)$ satisfy $\phi(y)=1$ for $y \in \Omega_1$. Then $v=\phi w$ is solution to the system
$$
\left\{\begin{array}{lll}
\para{\p_\tau-\mathcal{L}_a(y,\p)}v=\phi F+Q_1(y,\p_y)w, & y\in \Omega_2,\ t\in (0,T),\cr
v(y,0)=0, & y\in \Omega_2,\cr
v(y,\tau)=0, & y\in\p \Omega_2,\ t\in (0,T),
\end{array}
\right.
$$
for some first order operator $Q_1$ which is supported in $J_1=\Omega_2\backslash \Omega_1$. As a consequence we have
$$
\sup_{0\leq \tau\leq T}\norm{v(\cdot,\tau)}_{H^2(\Omega_2)}\leq C\para{\norm{F}_{H^1(0,T;L^2(\R))}+\norm{w_{\tau,y}}_{L^2((0,T)\times J_1)}},
$$
directly from Lemma \ref{L4.2}. Now the result follows from this and (\ref{4.9}) since
$F(\cdot,\tau)$ vanishes in a neighborhood of $J_1$ for each $\tau\in (0,T)$.
\end{proof}
%%%%%%%%%%%%%%%%%%%%%%%%%%%%%%%%%%%%%%%%%%%%%%%%%%%%%%%%%%%%%%%%%%%%%%%%%%%%%%
\section{Stability estimate for the linearized inverse problem}
%%%%%%%%%%%%%%%%%%%%%%%%%%%%%%%%%%%%%%%%%%%%%%%%%%%%%%%%%%%%%%%%%%%%%%%%%%%%%%%%%%%
As $w=w_{a_1}-w_{a_2}$ is solution to the linearized system
\begin{equation}\label{5.1}
\left\{
\begin{array}{llll}
\para{\p_\tau-\mathcal{L}_{a_1}(y,\p_y)}w=f(y)\alpha(y,\tau),  & \textrm{in }\,\, \R\times(0,T),\cr
w(y,0)=0,& \textrm{in }\,\,\R,\cr
\lim_{y\to\pm\infty}w(y,\tau)=0,& \textrm{on } \,\, (0,T),
\end{array}
\right.
\end{equation}
with $f=a_2-a_1$ and $\alpha(y,\tau)=(w_{a_2})_{yy}-(w_{a_2})_{y}$, by (\ref{2.6}), 
we now examine the inverse problem of determining $f$ from $w_{|\omega_2\times(0,T)}$ and $w(y,\tau^*)$, $y\in \Omega_1$. 

To this end we first recall from \cite{Bou-Isa} that
\begin{equation}\label{5.3}
\alpha_0=\inf_{y\in\Omega}\abs{\alpha(y,\tau^*)}>0,
\end{equation}
and then establish the following:
\begin{lemma}\label{L5.2}
There exists a constant $C>0$ such that we have
\begin{equation}\label{5.4}
\norm{f}^2_{L^2(\Omega)}
\leq C\norm{w(\cdot,\tau^*)}^2_{H^2(\Omega_1)} + \int_{0}^T\!\!\!\int_{\omega_2} \abs{w}^2 dyd\tau,
\end{equation}
for any solution $w$ to (\ref{5.1}).
\end{lemma}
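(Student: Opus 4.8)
The plan is to establish (\ref{5.4}) by the Bukhgeim--Klibanov scheme, combining the Carleman estimate of Lemma \ref{L3.1}, applied to $w_\tau$, with the interior estimates of \S\,4. Evaluating the PDE in (\ref{5.1}) at $\tau=\tau^*$ gives, for $y\in\Omega$,
$$
f(y)\,\alpha(y,\tau^*)=w_\tau(y,\tau^*)-\mathcal{L}_{a_1}(y,\p_y)w(y,\tau^*),
$$
so by the nondegeneracy (\ref{5.3}) and the elementary bound $\norm{\mathcal{L}_{a_1}w(\cdot,\tau^*)}_{L^2(\Omega)}\leq C\norm{w(\cdot,\tau^*)}_{H^2(\Omega_1)}$ (valid since $a_1\in L^\infty$), it suffices to estimate $\int_\Omega\abs{w_\tau(y,\tau^*)}^2 e^{2s\eta(y,\tau^*)}\,dy$ for one conveniently large, henceforth fixed, value of $s$: at the fixed time $\tau^*$ the weight $e^{2s\eta(\cdot,\tau^*)}$ is bounded below on the compact set $\overline\Omega$, so the constant in (\ref{5.4}) is allowed to depend on $s$.

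Next I would differentiate (\ref{5.1}) in $\tau$, obtaining $(\p_\tau-\mathcal{L}_{a_1})w_\tau=f\,\alpha_\tau$, and localise. Choosing $\chi\in\mathcal{C}^2_0(\Omega_1)$ with $\chi\equiv 1$ on $\Omega_2$ (the interval from (\ref{3.4})), so that $\operatorname{supp}\chi'\subset\Omega_1\backslash\Omega_2$, the function $\widehat z:=\chi w_\tau$ vanishes on $\p\Omega_1\times(0,T)$, lies in $H^{1,2}(Q)$ after the standard parabolic-regularity/density argument (the loss of regularity of $w_\tau$ near $\tau=0$ being absorbed by the vanishing of the weight there), and solves $(\p_\tau-\mathcal{L}_{a_1})\widehat z=\chi f\,\alpha_\tau+Q_1(y,\p_y)w_\tau$ with $Q_1$ a first-order operator supported in $\operatorname{supp}\chi'$. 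Applying Lemma \ref{L3.1} to $\widehat z$, and using that $e^{2s\eta}\to 0$ as $\tau\to 0^+$ so that
$$
\int_{\Omega_1}\abs{\widehat z(y,\tau^*)}^2 e^{2s\eta(y,\tau^*)}\,dy=\int_0^{\tau^*}\!\!\!\int_{\Omega_1}\para{2\widehat z\,\widehat z_\tau+2s\eta_\tau\abs{\widehat z}^2}e^{2s\eta}\,dyd\tau ,
$$
I would bound the right-hand side by Young's inequality together with the elementary estimate $\abs{\eta_\tau}\leq C_\lambda\varphi^2$ (recall that $\varphi$ is bounded below); for $\lambda$ and $s$ large this is dominated by the left-hand side of the Carleman inequality, hence by its right-hand side.

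It then remains to estimate the three resulting terms. (i) Since $f=a_2-a_1$ vanishes outside $\Omega$, $\chi f$ is supported in $\Omega$; writing $e^{2s\eta(y,\tau)}=e^{2s\eta(y,\tau^*)}e^{2s(\eta(y,\tau)-\eta(y,\tau^*))}$, noting that $\ell$, hence $\eta(y,\cdot)$, is maximal at $\tau^*$, and using boundedness of $\alpha_\tau$ on $\Omega\times(0,T)$, dominated convergence gives $\kappa(s):=\sup_{y\in\Omega}\int_0^T\abs{\alpha_\tau(y,\tau)}^2 e^{2s(\eta(y,\tau)-\eta(y,\tau^*))}d\tau\to 0$ as $s\to\infty$, so the source term is $\leq C\kappa(s)\int_\Omega\abs f^2 e^{2s\eta(\cdot,\tau^*)}dy$. (ii) The commutator term is supported in $\operatorname{supp}\chi'\subset\Omega_1\backslash\Omega_2$, where $F:=f\alpha\equiv 0$; Lemma \ref{L4.3} (in particular (\ref{4.9})) yields $\norm{w_\tau}_{L^2}+\norm{w_{\tau,y}}_{L^2}\leq C\norm{F}_{L^2(\R\times(0,T))}\leq C\norm f_{L^2(\Omega)}$ there, while (\ref{3.3})--(\ref{3.4}) give $\psi_0\leq\delta$ on $\operatorname{supp}\chi'$ but $\psi_0\geq 2\delta$ on $\Omega$, whence $\abs{\eta(\cdot,\tau^*)}$, being decreasing in $\psi_0$, is strictly larger on $\operatorname{supp}\chi'$ than on $\Omega$; this term is therefore $\leq Ce^{-2s(M'-M)}\int_\Omega\abs f^2 e^{2s\eta(\cdot,\tau^*)}dy$ with $M'>M$. (iii) On $\omega$ one has $\abs{\widehat z}\leq\abs{w_\tau}$, and since $w$ solves the homogeneous equation in $\omega_2\times(0,T)$ with zero initial datum, a standard interior parabolic estimate (obtained by iterating Lemma \ref{L4.2} with cut-offs) bounds $\norm{w_\tau}_{L^2(\omega\times(0,T))}$ by $\norm w_{L^2(\omega_2\times(0,T))}$; together with $s^3\varphi^3 e^{2s\eta}\leq C_\lambda$ this controls the observation term by $C(s,\lambda)\int_0^T\!\!\int_{\omega_2}\abs w^2\,dyd\tau$. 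Fixing $s$ so large that the coefficients in (i)--(ii) are $<\tfrac12$ and absorbing, one gets $\int_\Omega\abs f^2 e^{2s\eta(\cdot,\tau^*)}dy\leq C\para{\norm{w(\cdot,\tau^*)}_{H^2(\Omega_1)}^2+\int_0^T\!\!\int_{\omega_2}\abs w^2\,dyd\tau}$, and dividing out the (fixed, strictly positive) weight on $\overline\Omega$ gives (\ref{5.4}).

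The crux of the argument is this last absorption: every term proportional to $\norm f_{L^2(\Omega)}^2$ must carry a coefficient tending to $0$ as $s\to\infty$ fast enough to beat the factor $e^{2s\max_\Omega\abs{\eta(\cdot,\tau^*)}}$ lost when one passes from the weighted to the plain $L^2(\Omega)$-norm of $f$. For the source term in (i) this smallness comes not from the size of $\alpha_\tau$ but from the concentration near $\tau^*$ of the weight ratio $e^{2s(\eta-\eta(\cdot,\tau^*))}$, while for the commutator term in (ii) it comes from the strict gap between the values of $\psi_0$ on $\operatorname{supp}\chi'$ and on $\Omega$ furnished by (\ref{3.3})--(\ref{3.4}). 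Making both work simultaneously is precisely where the choices $T=2\tau^*$, of the weight $\eta$, and of the cut-off $\chi$ adapted to $\Omega_2$ are used.
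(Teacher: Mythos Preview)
Your proof is correct and follows essentially the same Bukhgeim--Klibanov scheme as the paper: localise $w_\tau$ by a cut-off equal to $1$ on $\Omega_2$, apply the Carleman estimate of Lemma \ref{L3.1}, recover $\int_\Omega|f|^2e^{2s\eta(\cdot,\tau^*)}$ from the pointwise identity at $\tau=\tau^*$ via the fundamental-theorem-in-$\tau$ trick, then absorb the source and commutator contributions for $s$ large and reduce the observation term on $\omega$ to $\norm{w}_{L^2(\omega_2\times(0,T))}$ by two layers of cut-offs.

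The one substantive variation is your treatment of the source term in step (i): you invoke dominated convergence to show $\kappa(s)=\sup_{y\in\Omega}\int_0^T|\alpha_\tau|^2e^{2s(\eta(y,\tau)-\eta(y,\tau^*))}\,d\tau\to 0$. The paper instead carries an explicit factor $s$ on the left (writing $\int s|z(\cdot,\tau^*)|^2e^{2s\eta(\cdot,\tau^*)}$ and balancing $s^{-1}\ell|z_\tau|^2$ against $s^3\ell^{-1}|z|^2$ in the Young step), and bounds the source simply by $\int_0^T\!\!\int_{\Omega_1}e^{2s\eta}|f|^2\leq T\int_{\Omega_1}|f|^2e^{2s\eta(\cdot,\tau^*)}$ using only $\eta(\cdot,\tau)\leq\eta(\cdot,\tau^*)$; the constant $T$ is then beaten by the factor $s$. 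Your route is equally valid but slightly heavier; the paper's buys the absorption from the Carleman weight's $s$-power rather than from the time-concentration of $e^{2s\eta}$. The handling of the commutator term via the gap (\ref{3.3})--(\ref{3.4}) and Lemma \ref{L4.3}, and of the observation term via nested cut-offs in $\omega\subset\omega_1\subset\omega_2$, matches the paper.
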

\begin{proof}
Let $\phi\in C_0^\infty(\Omega_1)$ be such that $\phi(y)=1$ for $y\in\Omega_2$.
Then $z = \phi w_\tau$ satisfies
$$
\para{\p_\tau-\mathcal{L}_{a_1}}z = f(y)\alpha_\tau(y,\tau)+\mathcal{Q}_1w_\tau\ \textrm{in}\ \Omega_1\times (0,T),
$$
by (\ref{5.1}), where $\mathcal{Q}_1$ is a first order partial differential operator supported in $\Omega_1 \backslash \Omega_2$. Moreover, it holds true that
\begin{equation}\label{5.6}
z(y, \tau^*) = A_2w(\tau^*,y) + f(y)\alpha( x, \tau^*) \quad \textrm{in}\, \Omega_1,
\end{equation}
for some second order partial differential operator $A_2$. Therefore, Lemma \ref{L3.1} yields
\begin{multline}\label{5.7}
C\int_{0}^T\!\!\!\int_{\Omega_1} e^{2s\eta}\para{s^3\ell^{-3}(\tau)\abs{z}^2 + s\ell^{-1}(\tau) \abs{z_y}^2 + s^{-1}\ell(\tau)\abs{z_\tau}^2 }dyd\tau\cr
\leq s^3\int_{0}^T\!\!\!\int_{\omega} e^{2s\eta}\ell^{-3}(\tau)\abs{z}^2 dyd\tau
+ \int_{0}^T\!\!\!\int_{\Omega_1}e^{2s\eta} \abs{f(y)}^2 dyd\tau+\int_{0}^T\!\!\!\int_{\Omega_1} e^{2s\eta} \abs{\mathcal{Q}_1w_\tau}^2 dyd\tau,
\end{multline}
provided $s > 0$ is large enough.\\
Further, bearing in mind that $e^{2s\eta(y,0)} = 0$ for every $y\in\Omega_1$, we deduce from
Cauchy-Schwarz and Young's inequalities that
\begin{multline}
\int_{\Omega_1} s\abs{z(y,\tau^*)}^2e^{2s\eta(y,\tau^*)}dyd\tau=\int^{\tau^*}_0 \frac{\p}{\p \tau}\para{\int_{\Omega_1}s\abs{z(y,\tau)}^2e^{2s\eta} dy}d\tau\cr
= \int_{0}^{\tau^*}\!\!\!\int_{\Omega_1}2s^2\eta_\tau e^{2s\eta}\abs{z}^2dyd\tau
+ \int_0^{\tau^*}\!\!\!\int_{\Omega_1} 2\para{ s^{-1 \slash 2} \ell(\tau)^{1 \slash 2} z_\tau}
\para{ s^{3 \slash 2} z \ell(\tau)^{-1 \slash 2} }e^{2s\eta} dyd\tau \\
\leq C\int_{0}^{\tau^*}\!\!\!\int_{\Omega_1}
s^2\ell^{-2}(\tau) e^{2s\eta} \abs{z}^2 dyd\tau
+ C\int_{0}^{\tau^*}\!\!\!\int_{\Omega_1}
\para{ \frac{1}{s} \ell(\tau) \abs{z_\tau}^2
+ s^3\abs{z}^2\ell^{-1}(\tau)}e^{2s\eta} dyd\tau.
\end{multline}
In light of (\ref{5.7}) this entails
\begin{multline}\label{5.8}
\int_{\Omega_1} s\abs{z(y,\tau^*)}^2e^{2s\eta(y,\tau^*)}dyd\tau\leq Cs^3 \int_{0}^{\tau^*}\!\!\!\int_{\omega}
e^{2s\eta} \ell^{-3}(\tau) \abs{z}^2 dyd\tau\cr
+ C\int_0^T\!\!\!\int_{\Omega_1} e^{2s\eta} \abs{f(y)}^2 dyd\tau+C\int_{0}^T\!\!\!\int_{\Omega_1} e^{2s\eta} \abs{\mathcal{Q}_1w_\tau}^2 dyd\tau,
\end{multline}
upon taking $s > 0$ sufficiently large .\\
By substituting the right hand side of (\ref{5.6}) for $z(y,\tau^*)$ in (\ref{5.8}) we thus find out that
\begin{multline}\label{5.9}
\int_{\Omega_1} s\abs{f(y)}^2 \abs{\alpha(y,\tau^*)}^2 e^{2s\eta(y,\tau^*)}dy
\leq C\int_{\Omega_1} s\abs{A_2w(y,\tau^*)}^2e^{2s\eta(y,\tau^*)}dy\cr
+ Cs^3\int_0^T\!\!\!\int_{\omega} e^{2s\eta} \ell^{-3}(\tau)
\abs{z}^2 dyd\tau
+ C\int_0^T\!\!\!\int_{\Omega_1}e^{2s\eta} \abs{f(y)}^2 dyd\tau+C\int_{0}^T\!\!\!\int_{\Omega_1} e^{2s\eta} \abs{\mathcal{Q}_1w_\tau}^2 dyd\tau,
\end{multline}
for $s > 0$ large enough.\\
The next step of the proof is to absorb the third term in the right hand side of (\ref{5.9}) into its left side. To do that we first notice that
$$
\int_0^T\!\!\!\int_{\Omega_1} e^{2s\eta} \abs{f(y)}^2 dy
\leq \int_{\Omega_1} \abs{f(y)}^2 \para{
\int^T_0 e^{2s\eta(y,\tau^*)} d\tau } dy
= T\int_{\Omega_1} \abs{f(y)}^2 e^{2s\eta(y,\tau^*)}dy,
$$
since $\eta(y,\tau) \leq \eta(y, \tau^*)$ for all $(y,\tau) \in \Omega_1\times (0,T)$, as we have taken $T=2\tau^*$. Thus it follows from (\ref{5.3}) that
\begin{multline}\label{5.11}
s\int_{\Omega_1} \abs{f(y)}^2 e^{2s\eta(y,\tau^*)}dy
\leq Cs\int_{\Omega_1} \abs{A_2w(y,\tau^*)}^2 e^{2s\eta(y,\tau^*)}dy \\
+ Cs^3 \int_{0}^T\!\!\!\int_{\omega} e^{2s\eta} \ell^{-3}(\tau)
\abs{w_\tau}^2 dyd\tau+C\int_{0}^T\!\!\!\int_{\Omega_1} e^{2s\eta(y,\tau^*)} \abs{\mathcal{Q}_1w_\tau}^2 dyd\tau.
\end{multline}
Further, taking into account that $\sup_{y\in \Omega_1} e^{2s\eta(y,\tau^*)} < \infty$ and 
$\sup_{(y,\tau)\in \Omega_1\times(0,T)} \ell^{-3}(\tau) e^{2s\eta(y,\tau)} < \infty$,
(\ref{5.11}) then yields
\begin{multline}
s\int_{\Omega} \abs{f(y)}^2 e^{2s\eta(y,\tau^*)}dy
\leq Cs\norm{w(\cdot,\tau^*)}^2_{H^2(\Omega_1)} + Cs^3\int_{0}^T\!\!\!\int_{\omega} \abs{w_\tau}^2 dyd\tau\cr
+C\int_{0}^T\!\!\!\int_{\Omega_1\backslash \Omega_2} e^{2s\eta(y,\tau^*)} \para{\abs{w_\tau}^2+\abs{w_{\tau,y}}^2} dyd\tau,
\end{multline}
for $s$ sufficiently large. The next step involves noticing that
$$
\sup_{y\in\Omega_1\backslash\Omega_2}e^{2s\eta(y,\tau^*)}<\inf_{y\in\Omega}e^{2s\eta(y,\tau^*)}.
$$
Namely, putting
$$
m_1=\frac{e^{2\lambda\delta}-e^{2\lambda\overline{\psi}}}{\ell(\tau^*)}\ \textrm{and}\ m_2=\frac{e^{\lambda\delta}-e^{2\lambda\overline{\psi}}}{\ell(\tau^*)}\ \textrm{in such a way that}\ m=m_2-m_1<0,
$$
we deduce from the two assumptions (\ref{3.3})-(\ref{3.4}) that
$$
\eta(y,\tau^*)\geq m_1,\ y\in \Omega\ \textrm{and}\ \eta(y,\tau^*)\leq m_2,\ y\in \Omega_1\backslash \Omega_2.
$$
Therefore,
\begin{multline}
s\int_{\Omega} \abs{f(y)}^2dy
\leq C s\norm{w(\cdot,\tau^*)}^2_{H^2(\Omega_1)} + Cs^3\int_{0}^T\!\!\!\int_{\omega} \abs{w_\tau}^2 dyd\tau\cr
+Ce^{2ms}\int_{0}^T\!\!\!\int_{\Omega_1\backslash \Omega_2}\para{\abs{w_\tau}^2+\abs{w_{\tau,y}}^2} dyd\tau,
\end{multline}
for $s$ sufficiently large, whence
\begin{equation}\label{5.14}
s\norm{f}^2_{L^2(\Omega)}
\leq C s\norm{w(\cdot,\tau^*)}^2_{H^2(\Omega_1)} + C s\int_{0}^T\!\!\!\int_{\omega} \abs{w_\tau}^2 dyd\tau
+Ce^{2ms}\norm{f}^2_{L^2(\Omega)},
\end{equation}
by (\ref{4.9}). Now, taking $s > 0$ large enough in (\ref{5.14}), we end up getting
\begin{equation}\label{5.15}
\norm{f}^2_{L^2(\Omega)}
\leq C\norm{w(\cdot,\tau^*)}^2_{H^2(\Omega_1)} + \int_{0}^T\!\!\!\int_{\omega} \abs{w_\tau}^2 dyd\tau.
\end{equation}
The remaining part of the proof is to upper bound the last term in the right hand side of (\ref{5.15}). To do that we pick $\xi\in \mathcal{C}_0^\infty(\omega_1)$, where $\omega_1$ is the same as in (\ref{3.1}), such that $\xi(y)=1$ for $y \in \omega$. It is easy to check that $v=\xi w$ is solution to
$$
\left\{\begin{array}{lll}
\para{\p_\tau-\mathcal{L}_{a_1}}v=\mathcal{Q}_1(y,\p_y)w, & y\in \omega_1,\ \tau\in (0,T),\cr
v(y,0)=0, & y\in \omega_1,\cr
v(y,\tau)=0, & y\in\p \omega_1,\ \tau \in (0,T),
\end{array}
\right.
$$
where $\mathcal{Q}_1$ is a first order differential operator supported in $\omega_1\backslash \omega$. Therefore (\ref{4.8}) entails
\begin{equation}\label{5.17}
\norm{w_\tau}_{L^2((0,T)\times\omega)}\leq C\para{\norm{w}_{L^2((0,T)\times \omega_1)}+\norm{w_{y}}_{L^2((0,T)\times \omega_1)}}.
\end{equation}
By arguing as above with $\chi\in \mathcal{C}_0^\infty(\omega_2)$, where $\omega_2$ is defined in (\ref{3.1}), and such that $\chi(y)=1$ for $y \in \omega_1$, we find out that $z=\chi w$ is solution to following system
\begin{equation}\label{5.18}
\left\{\begin{array}{lll}
\para{\p_\tau-\mathcal{L}_a(y,\p)}z=\mathcal{Q}'_1(y,\p_y)w, & y\in \omega_2,\ \tau\in (0,T),\cr
z(y,0)=0, & y\in \omega_2,\cr
z(y,\tau)=0, & y\in\p \omega_2,\ \tau\in (0,T),
\end{array}
\right.
\end{equation}
where $\mathcal{Q}'_1$ is a first order differential operator supported in $\omega_2\backslash \omega_1$. Finally, multiplying the differential equation in (\ref{5.18}) by $\overline{z}$ and integrating by parts, yields
$$
\norm{w_{y}}_{L^2(\omega_1\times(0,T))}\leq C\norm{w}_{L^2(\omega_2\times(0,T))}.
$$
This, combined with (\ref{5.15}) and (\ref{5.17}), entails the desired result.
\end{proof}
%%%%%%%%%%%%%%%%%%%%%%%%%%%%%%%%%%%%%%%%%%%%%%%%%%%%%%%%%%%%%%%%%%%
\section{Completion of the proof of Theorem \ref{T1.1}}
%%%%%%%%%%%%%%%%%%%%%%%%%%%%%%%%%%%%%%%%%%%%%%%%%%%%%%%%%%%%%%%%%%%%%
The last step of the proof is to get rid of the last integral in the right hand side of (\ref{5.4}). Namely we shall prove that $\norm{w}_{L^2(\omega_2)\times(0,T)}$ can be majorized  by $\norm{w(\tau^*,\cdot)}_{H^2(\Omega_1)}$ up to some positive multiplicative constant.

Inspired by \cite{SU} we start by establishing the:
\begin{lemma}\label{L6.1}
Let $X$, $Y$ , $Z$ be three Banach spaces, let $\mathcal{A} : X \to Y$ be a bounded injective linear operator with domain $\mathscr{D}(\mathcal{A})$, and let $\mathcal{K}: X \to Z$ be a compact linear operator. Assume that there exists $C>0$ such that
\begin{equation}\label{6.1}
\norm{f}_X \leq C_1 \norm{\mathcal{A}f}_Y +\norm{\mathcal{K}f}_Z,\quad  \forall f \in \mathscr{D}(\mathcal{A}).
\end{equation}
Then there exists $C>0$ such that
\begin{equation}\label{6.2}
\norm{f}_X \leq C\norm{\mathcal{A}f}_Y,\quad \forall f \in \mathscr{D}(\mathcal{A}).
\end{equation}
\end{lemma}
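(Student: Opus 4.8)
The plan is to deduce the unconditional stability estimate \eqref{6.2} from the ``stability up to a lower order term'' estimate \eqref{6.1} by a standard Fredholm-type contradiction argument, in which the compactness of $\mathcal{K}$ is precisely what lets one absorb the term $\norm{\mathcal{K}f}_Z$ appearing on the right-hand side of \eqref{6.1}.

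I would start by supposing that \eqref{6.2} fails for every choice of the constant $C$. Then for each integer $n \geq 1$ there is some $f_n \in \mathscr{D}(\mathcal{A})$ with $\norm{f_n}_X > n\norm{\mathcal{A}f_n}_Y$; in particular $f_n \neq 0$, so after dividing by $\norm{f_n}_X$ we may assume $\norm{f_n}_X = 1$ for all $n$, whence $\norm{\mathcal{A}f_n}_Y < 1/n \to 0$. The sequence $(f_n)_n$ is thus bounded in $X$, and since $\mathcal{K}$ is compact we may pass to a subsequence, still denoted $(f_n)_n$, along which $(\mathcal{K}f_n)_n$ converges in $Z$; in particular $(\mathcal{K}f_n)_n$ is a Cauchy sequence in $Z$.

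Next I would apply the hypothesis \eqref{6.1} to the differences $f_n - f_m$, which belong to the linear subspace $\mathscr{D}(\mathcal{A})$, obtaining
$$
\norm{f_n - f_m}_X \leq C_1\norm{\mathcal{A}f_n - \mathcal{A}f_m}_Y + \norm{\mathcal{K}f_n - \mathcal{K}f_m}_Z .
$$
Since $\mathcal{A}f_n \to 0$ in $Y$ and $(\mathcal{K}f_n)_n$ is Cauchy in $Z$, the right-hand side tends to $0$ as $n, m \to \infty$, so $(f_n)_n$ is a Cauchy sequence in the Banach space $X$ and converges to some $f \in X$ with $\norm{f}_X = 1$. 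As $\mathcal{A}$ is bounded, hence continuous, $\mathcal{A}f_n \to \mathcal{A}f$ in $Y$; but also $\mathcal{A}f_n \to 0$, so $\mathcal{A}f = 0$, and the injectivity of $\mathcal{A}$ forces $f = 0$. This contradicts $\norm{f}_X = 1$ and therefore proves \eqref{6.2}.

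I do not anticipate any genuine obstacle: the argument is entirely soft, using only completeness of $X$, continuity of $\mathcal{A}$ and compactness of $\mathcal{K}$. The sole point to keep track of is the bookkeeping with the domain — one needs $\mathscr{D}(\mathcal{A})$ to be a linear subspace so that $f_n - f_m$ is admissible in \eqref{6.1}, and one needs the limit $f$ to lie in $\mathscr{D}(\mathcal{A})$ with $\mathcal{A}f = \lim_n \mathcal{A}f_n$, which is automatic when $\mathscr{D}(\mathcal{A})$ is closed in $X$ (as in the intended application, where $\mathscr{D}(\mathcal{A}) = X$). The real work for the proof of Theorem \ref{T1.1} is then to fit Lemma \ref{L5.2} into this abstract framework: with $X = L^2(\Omega)$, one takes $\mathcal{A}f = w(\cdot,\tau^*)$ (valued in $H^2(\Omega_1)$) and $\mathcal{K}f = w|_{\omega_2\times(0,T)}$, where $w$ is the solution of \eqref{5.1} associated with $f$; one must check that $\mathcal{A}$ is bounded and injective and that $\mathcal{K}$ is compact, the compactness following from the parabolic smoothing encoded in Lemmas \ref{L4.1}--\ref{L4.4} combined with a Rellich-type compact embedding. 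That verification, however, is external to the present abstract statement.
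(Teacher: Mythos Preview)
Your proof is correct and follows essentially the same contradiction argument as the paper: normalize a violating sequence, use compactness of $\mathcal{K}$ to extract a subsequence with $(\mathcal{K}f_n)$ Cauchy in $Z$, feed the differences into \eqref{6.1} to get $(f_n)$ Cauchy in $X$, and then contradict injectivity of $\mathcal{A}$ at the limit. Your added remark about needing $f\in\mathscr{D}(\mathcal{A})$ (which holds in the application since $\mathscr{D}(\mathcal{A})=X$) is in fact more careful than the paper, which tacitly assumes this.
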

\begin{proof}
Given $\mathcal{A}$ bounded and injective we argue by contradiction by assuming the opposite to (\ref{6.2}). Then there exists a sequence $(f_n)_n$ in $X$ such that $\norm{f_n}_X = 1$ for all $n$ and $\mathcal{A}f_n \to 0$ in $Y$ as $n$ go to infinity. Since $\mathcal{K}: X \to Z$ is compact, there is  a subsequence, still denoted by $f_n$, such that $\mathcal{K}f_n$ converges in $Z$. Therefore this is a Cauchy sequence in $Z$, hence, by applying (\ref{6.1}) to $f_n-f_m$, we get that $\norm{f_n-f_m}_X \to 0$, as $n\to\infty$, $m \to\infty$. As a consequence $(f_n)_n$ is a Cauchy sequence in $X$ so $f_n \to f$ as $n\to\infty$ for some $f \in X$. Since
$\norm{f_n}_X = 1$ for all $n$, we necessarily we have $\norm{f}_X=1$. Moreover it holds true that $\mathcal{A}f_n \to \mathcal{A}(f)= 0$ as $n \to \infty$, which is a contradiction to the fact that $\mathcal{A}$ is injective.
\end{proof}
Let us now introduce 
$$
X=\set{f\in L^2(\R),\ f(y)=0\ \textrm{in}\ \R\backslash \Omega},\ Y=H^2(\Omega_1),\ Z=L^2((0,T)\times \omega_2),
$$
and define
$$
\mathcal{A}: X\to Y,\ \mathcal{A}(f)=w(\tau^*,\cdot)\ \textrm{and}\ \mathcal{K} : X\to Z,\ \mathcal{K}(f)=w_{(0,T)\times \omega_2},
$$
where $w$ denotes the unique solution to (\ref{5.1}), so we can state the:
\begin{lemma}\label{L6.2}
The operator $\mathcal{A}$ is bounded and injective.
\end{lemma}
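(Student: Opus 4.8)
The plan is to verify boundedness and injectivity of $\mathcal{A}$ separately, the first being a direct consequence of the parabolic estimates of Section 4 and the second resting on the unique continuation property together with the structure of the linearized equation (\ref{5.1}).

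\emph{Boundedness.} Let $f \in X$, so that $f$ is supported in $\Omega \Subset \Omega_1$, and let $w$ be the solution to (\ref{5.1}). The right-hand side of the PDE in (\ref{5.1}) is $F(y,\tau) = f(y)\alpha(y,\tau)$, which is supported in $\Omega \times (0,T)$ since $f$ vanishes outside $\Omega$. Because $w_{a_2}$ is a sufficiently regular solution of (\ref{2.6}), the coefficient $\alpha = (w_{a_2})_{yy} - (w_{a_2})_y$ and its time derivative $\alpha_\tau$ are bounded on $\Omega \times (0,T)$, so that $F \in H^1(0,T; L^2(\Omega))$ with $\norm{F}_{H^1(0,T;L^2(\Omega))} \leq C\norm{f}_{L^2(\Omega)} = C\norm{f}_X$. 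Applying Lemma \ref{L4.4} with this $F$ (whose support lies in $\Omega$) gives
$$
\sup_{0\le\tau\le T}\norm{w(\tau,\cdot)}_{H^2(\Omega_1)} \leq \norm{F}_{H^1(0,T;L^2(\Omega))} \leq C\norm{f}_X,
$$
and in particular $\norm{\mathcal{A}f}_Y = \norm{w(\tau^*,\cdot)}_{H^2(\Omega_1)} \leq C\norm{f}_X$. The linearity of $f \mapsto w$ is immediate from the linearity of (\ref{5.1}), so $\mathcal{A}$ is a bounded linear operator.

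\emph{Injectivity.} Suppose $f \in X$ satisfies $\mathcal{A}f = 0$, i.e. the corresponding solution $w$ of (\ref{5.1}) satisfies $w(\tau^*,\cdot) = 0$ on $\Omega_1$. I would first argue that $w$ vanishes on all of $\R \times (0,\tau^*)$: outside $\Omega$ the source term $f\alpha$ vanishes, so $w$ solves the homogeneous equation there with zero initial data, and by combining this with a unique continuation / backward uniqueness argument for the parabolic operator $\p_\tau - \mathcal{L}_{a_1}$ — using that $w(\tau^*,\cdot)=0$ on the open set $\Omega_1$ and the real-analyticity in $\tau$ of solutions of the homogeneous equation — one concludes $w \equiv 0$ on $\R \times [0,\tau^*]$, hence $w_\tau \equiv 0$ as well. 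Feeding $w \equiv 0$ and $w_\tau \equiv 0$ back into the PDE in (\ref{5.1}) gives $f(y)\alpha(y,\tau) = 0$ for a.e.\ $(y,\tau) \in \R\times(0,\tau^*)$. Evaluating at $\tau$ near $\tau^*$ and invoking the nondegeneracy (\ref{5.3}), namely $\abs{\alpha(y,\tau^*)} \geq \alpha_0 > 0$ for $y \in \Omega$, forces $f \equiv 0$ on $\Omega$; since $f$ already vanishes outside $\Omega$ by definition of $X$, we get $f = 0$ in $X$.

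\emph{Main obstacle.} The delicate point is the backward-uniqueness step: from $w(\tau^*,\cdot)=0$ on the open interval $\Omega_1$ and the vanishing of the source outside $\Omega$, one must propagate the zero set to the whole strip $\R\times(0,\tau^*)$. This combines a spatial unique continuation statement (to pass from $w(\tau^*,\cdot)=0$ on $\Omega_1$ to $w(\tau^*,\cdot)=0$ on $\R$, using that $w$ solves the homogeneous equation with known far-field behaviour) with backward uniqueness in time for the parabolic equation; both are standard but require that the coefficient $a_1$ be regular enough and that one controls the behaviour of $w$ at spatial infinity, which is supplied by Lemma \ref{L4.1}. Once this is in place, the rest is the routine evaluation argument against (\ref{5.3}) carried out above.
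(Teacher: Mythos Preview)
Your boundedness argument is correct and essentially matches the paper's (the paper cites \eqref{4.7}, but Lemma~\ref{L4.4} is indeed the right statement to invoke, and it rests on \eqref{4.7}).

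For injectivity, the paper takes a shorter and different route. Rather than attempting to show $w\equiv 0$ on all of $\R\times(0,\tau^*)$, the paper only shows $w=0$ on $\omega_2\times(0,T)$, and then feeds this together with $w(\cdot,\tau^*)=0$ directly into the already-proved estimate \eqref{5.4} of Lemma~\ref{L5.2}, which immediately gives $\norm{f}_{L^2(\Omega)}=0$. The local vanishing on $\omega_2$ is obtained exactly by the time-analyticity mechanism you mention: since $f=0$ on $\omega_2\subset\Omega_1\setminus\Omega$, the equation is homogeneous there; from $w(\cdot,\tau^*)=0$ on $\Omega_1\supset\omega_2$ one gets $w_\tau(\cdot,\tau^*)=\mathcal{L}_{a_1}w(\cdot,\tau^*)=0$ on $\omega_2$, and inductively all $\tau$-derivatives vanish on $\omega_2\times\{\tau^*\}$, so time-analyticity forces $w=0$ on $\omega_2\times(0,T)$.

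Your route, by contrast, carries a genuine gap at the step you yourself flag as the main obstacle. The ``spatial unique continuation'' you propose --- passing from $w(\tau^*,\cdot)=0$ on $\Omega_1$ to $w(\tau^*,\cdot)=0$ on all of $\R$ --- is not well-posed as written: at a fixed time, $w(\tau^*,\cdot)$ satisfies no closed equation in $y$, so there is nothing to uniquely continue. And standard backward uniqueness for parabolic equations requires the terminal datum to vanish on the whole spatial domain, not just on a subinterval. One can in principle salvage the global vanishing by first running the paper's local time-analyticity argument on $\omega_2$ and then invoking lateral parabolic unique continuation to propagate outward, but this is considerably more delicate than necessary. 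The point you are missing is that estimate \eqref{5.4} was built precisely to convert the \emph{local} information $w|_{\omega_2\times(0,T)}=0$ into $f=0$, so global vanishing of $w$ is never needed.
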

\begin{proof}
First the boundedness of $\mathcal{A}$ follows readily from (\ref{4.7}). Second $w$ being solution to
$$
\left\{
\begin{array}{llll}
\para{\p_\tau-\mathcal{L}_{a_1}(y,\p)}w=f(y)\alpha(y,\tau), & \textrm{in }\ \R\times(0,T),\cr
w(y,0)=0,& \textrm{in }\ \R,
\end{array}
\right.
$$
where $f=a_2-a_1$ and $\alpha=(w_{a_2})_{yy}-(w_{a_2})_{y}$,
we deduce from the identities $w(\cdot, \tau^*) = 0$ and $f = 0$ on $\omega_2\subset\Omega_1\backslash\Omega$ that $w_\tau(\cdot,\tau^*)=0$ on $\omega_2$. Arguing in the same way we get that the successive derivatives of $w$ wrt $\tau$ vanish on $\omega_2\times\set{\tau^*}$. Since $w$ is solution to some initial value problem with time independent coefficients, it is time analytic so we necessarily have $w= 0$ on $\omega_2\times(0,\tau^*)$. Therefore $f = 0$ on $\Omega$ by (\ref{5.4}), and the proof is complete.
\end{proof}
\begin{lemma}\label{L6.3}
$\mathcal{K}$ is a compact operator.
\end{lemma}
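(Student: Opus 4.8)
The plan is to realise $\mathcal{K}$ as the composition of a bounded linear operator with a compact embedding. Fix a bounded interval $J$ with $\overline{\omega_2}\subset J$, and let $\mathcal{B}: X\to H^{1,2}(\omega_2\times(0,T))$ send $f$ to the restriction to $\omega_2\times(0,T)$ of the solution $w$ to (\ref{5.1}); this map is well defined and linear in $f$, since $f$ enters (\ref{5.1}) linearly through the source term $f\,\alpha$, with $\alpha$ fixed. Writing $\iota$ for the canonical inclusion $H^{1,2}(\omega_2\times(0,T))\hookrightarrow L^2((0,T)\times\omega_2)=Z$, we have $\mathcal{K}=\iota\circ\mathcal{B}$, so it suffices to show that $\mathcal{B}$ is bounded and $\iota$ is compact, the composition of a bounded operator with a compact one being compact.

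To bound $\mathcal{B}$, set $F(y,\tau)=f(y)\alpha(y,\tau)$, so that $w$ solves (\ref{4.1})--(\ref{4.2}) with this right-hand side. Since $f$ vanishes outside $\Omega$ and $\alpha$ is square integrable on $\Omega\times(0,T)$ uniformly in $y\in\Omega$ --- the same regularity of $w_{a_2}$ borrowed from \cite{Bou-Isa} that was used for the boundedness of $\mathcal{A}$ in Lemma \ref{L6.2} --- we get $\norm{F}_{L^2(0,T;L^2(\R))}\leq C\norm{f}_{L^2(\Omega)}=C\norm{f}_X$. Invoking (\ref{4.8}) of Lemma \ref{L4.3} with $J$ in place of $J_2$ --- whose proof simultaneously yields $w\in H^{1,2}(J\times(0,T))$ --- we obtain
$$
\norm{w}_{H^{1,2}(J\times(0,T))}\leq C\norm{F}_{L^2(0,T;L^2(\R))}\leq C\norm{f}_X ,
$$
and restricting to $\omega_2\times(0,T)\subset J\times(0,T)$ gives $\norm{\mathcal{B}f}_{H^{1,2}(\omega_2\times(0,T))}\leq C\norm{f}_X$. (Since $\omega_2\subset\Omega_1\backslash\Omega$, the source $F$ in fact vanishes on a neighbourhood of $\omega_2\times(0,T)$, so (\ref{4.9}) and Lemma \ref{L4.4} would give additional smoothness of $w$ there; this is not needed here.)

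To see that $\iota$ is compact, observe that $\omega_2\times(0,T)$ is a bounded rectangle of $\R^2$, hence a bounded convex --- in particular Lipschitz --- domain, and that every $z\in H^{1,2}(\omega_2\times(0,T))$ satisfies $z,z_\tau,z_y\in L^2(\omega_2\times(0,T))$; thus $H^{1,2}(\omega_2\times(0,T))$ embeds continuously into $H^1(\omega_2\times(0,T))$. By the Rellich--Kondrachov theorem the inclusion $H^1(\omega_2\times(0,T))\hookrightarrow L^2(\omega_2\times(0,T))$ is compact, whence so is $\iota$. Combined with the boundedness of $\mathcal{B}$, this shows that $\mathcal{K}$ is compact.

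The only step that calls for real care, rather than bookkeeping, is the uniform-in-$y$ square integrability of $\alpha$ on $\Omega\times(0,T)$ underlying $\norm{F}_{L^2(0,T;L^2(\R))}\leq C\norm{f}_X$; this is precisely the regularity of the Black--Scholes price $w_{a_2}$ imported from \cite{Bou-Isa} and already used in Lemma \ref{L6.2}, so it introduces nothing new. Everything else reduces to the interior parabolic estimates of Lemmas \ref{L4.1}--\ref{L4.4} together with a standard compact Sobolev embedding.
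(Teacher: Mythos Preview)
Your proof is correct and follows essentially the same route as the paper: factor $\mathcal{K}$ as a bounded map into a higher-regularity space followed by a compact Sobolev embedding into $L^2$, with the boundedness coming from (\ref{4.8}) and the compactness from Rellich--Kondrachov. The only cosmetic difference is that the paper uses $H^1((0,T)\times\omega_2)$ as the intermediate space while you use $H^{1,2}(\omega_2\times(0,T))$, which you yourself observe embeds continuously into $H^1$.
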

\begin{proof}
The operator $\mathcal{K}$ being bounded from $X$ to $H^1((0,T)\times\omega_2)$ as we have
$$
\norm{w}_{L^2((0,T);H^1(\omega_2))}+\norm{w_\tau}_{L^2((0,T);L^2(\omega_2))}\leq \norm{f}_{L^2(\Omega)},
$$
by (\ref{4.8}), the result follows readily from the compactness of the injection $H^1((0,T)\times\omega_2)\hookrightarrow Z=L^2((0,T)\times \omega_2)$.
\end{proof}
Finally, by putting Lemmas \ref{L5.2}, \ref{L6.1}, \ref{L6.2} and \ref{L6.3} together, we end up getting that 
$$
  \norm{a_1-a_2}_{L^2(\Omega)}=\norm{f}_{L^2(\Omega)}\leq C\norm{\mathcal{A}(f)}_Y=C\norm{w(\tau^*,\cdot)}_{H^2(\Omega_1)},
$$
which yields Theorem \ref{T1.1}.

\section{Appendix}

The existence of a weight function $\psi_0$ fulfilling the conditions prescribed by (\ref{3.2}) for some fixed subset $\omega$ of $\Omega_1$ can be checked from \cite{[FI]}. Nevertheless, for the sake of completeness and for the convenience of the reader, we give in this Appendix an explicit expression of such a function $\psi_0$ in the one-dimensional case examined in this article. 

To this purpose we set for all $a \in [1 \slash 2,1)$,
\begin{equation}
\label{a1}
f_a(x) = \mu x^n + (1-\mu) x,\ x \in (0,1),\ \textrm{where $n$ is taken so large that}\ a^n < 1 \slash 2\ \textrm{and}\ \mu=\frac{a-1 \slash 2}{a-a^n}.
\end{equation}
If $a \in (0,1 \slash 2)$, put
$$
f_a(x)=1-f_{1-a}(1-x),\ x \in (0,1),
$$
where $f_{1-a}$ is defined by (\ref{a1}). For every $a \in (0,1)$, it is not hard to check that the function
$$ \psi_0(x) = \sin(\pi f_a(x)),\ x \in (0,1), $$
obeys
$$ \psi_0(x) \geq 0,\ x \in (0,1)\ \textrm{and}\ |\psi_0'(x)|>0,\ x \in (0,1)\setminus \{ a\}. $$
In light of this, the general case of an open interval $\Omega_1=(\alpha,\beta)$, where $\alpha < \beta$ are two real numbers, can be handled by defining
\begin{equation}
\label{a3}
\psi_0(x) = \sin( \pi f_a(h^{-1}(x)) ),\ x \in \Omega_1,
\end{equation}
where $h^{-1}$ is the inverse function to
\begin{equation}
\label{a4}
\left\{
\begin{array}{lll}
h : & (0,1) \longrightarrow  \Omega_1\cr
 & y \longrightarrow y \alpha + (1-y) \beta.\cr
\end{array}
\right.
\end{equation}
Evidently, the function $\psi_0$ given by (\ref{a3})-(\ref{a4}) fulfills (\ref{3.2}) provided $h(a) \in \omega$.

%%%%%%%%%%%%%%%%%%%%%%%%%%%%%%%%%%%%%%%%%%

\end{document}